\theoremstyle{plain}
    \newtheorem{thm}{Theorem}[section]
    \newtheorem{prop}{Proposition}[section]
    \newtheorem{lemma}{Lemma}[section]
    \newtheorem{cor}{Corollary}[section]
    \newtheorem{defn}{Definition}[section]
    \newtheorem{rem}{Remark}[section]
\numberwithin{equation}{section}
\begin{document}

\title{Uniqueness of positive radial solutions of  Choquard type  equations \thanks{The first author was partially supported by the  Natural Science Foundation of Hunan Province (Grant No. 2022JJ30235) and the National Natural Science Foundation of China (Grant No. 12001188) ; the third author was supported by the Natural Science Foundation of Hunan Province (Grant No. 22B0484) and Natural Science Foundation of Hunan Province (Grant No. 2024JJ5214).}}

\author{
Tao Wang\\
  College of Mathematics and Computing Science,\\
Hunan University of Science and Technology, \\
Xiangtan, Hunan 411201, P. R. China\\
Email: wt\_61003@163.com;\\
Xiaoyu Tian\\
  College of Mathematics and Computing Science,\\
Hunan University of Science and Technology, \\
Xiangtan, Hunan 411201, P. R. China\\
Email: 18725613677@163.com;\\
Hui Guo\\
  College of Mathematics and Computing Science,\\
Hunan University of Science and Technology, \\
Xiangtan, Hunan 411201, P. R. China\\
Email: huiguo\_math@163.com;}
\date {}
\maketitle

\begin{abstract}
In this paper, we consider the following Choquard type equation
\begin{equation}
\left\{\begin{aligned}
&-\Delta u+\lambda u=\gamma(\Phi_N(|x|)\ast|u|^p)u  \ \ \mbox{in   $\mathbb{R}^N$}, \\
&\lim\limits_{|x|\to\infty}u(x)=0,\\
\end{aligned}\right.
\end{equation}
 where $N\geq2,\lambda>0,\gamma>0, p\in[1,2]$ and $\Phi_N(|x|)$ denotes the fundamental solution of the Laplacian $-\Delta$ on $\mathbb{R}^N$. This equation does not have a variational frame when $p\neq 2.$ Instead of variational methods,  we prove the existence  and uniqueness  of positive radial solutions of the above equation via  the  shooting method by establishing some new differential inequalities. The proofs  are based on an analysis of the corresponding system of second-order differential equations, and our results extend the existing ones in the literature from $p=2$ to $p\in[1,2]$.
\end{abstract}
\bigskip
{\bf Keywords}: {\em Nonlocal equations; Radial solutions; Uniqueness; Shooting method. }

\bigskip\noindent
{\bf Mathematics Subject Classification (2020)}: 35A02; 35B07; 35J08; 35J47.
\section{Introduction}
In this paper, we are concerned with the standing wave solutions of the following Choquard type  system
\begin{equation}\label{1.1}
\left\{\begin{aligned}
&i\psi_t+\Delta\psi-\gamma V\psi=0\quad \\
&\Delta V=|\psi|^p%\quad\mbox{ in $\mathbb{R}^N\times \mathbb{R}_+$},\\
\end{aligned}\right. \mbox{ in $\mathbb{R}^N\times \mathbb{R}_+$},
\end{equation}
where $i$ denotes the imaginary unit, $\gamma>0$, $N\geq2$ and $p\in[1,2]$. By using  the fundamental solution $\Phi_N$ of the Laplacian on $\mathbb{R}^N$, system \eqref{1.1} is equivalent to the nonlinear Choquard type  equation
\begin{equation}\label{1.2}
i\psi_t+\Delta\psi+\gamma(\Phi_N(|x|)\ast|\psi|^p)\psi=0\  \mbox{\ in $\mathbb{R}^N \times \mathbb{R}_+$}.
\end{equation}
In the current work, we seek the  existence and uniqueness of radial stationary solutions of the form
\begin{equation}
\psi(t,x)=u_\lambda(|x|)e^{i\lambda t},\quad u_\lambda(|x|)>0,\quad \lim\limits_{|x|\to\infty}u_\lambda(|x|)=0,
\end{equation}
where $\lambda>0$. Then equation \eqref{1.2} reduces to
\begin{equation}\label{1.4}
-\Delta u+\lambda u=\gamma(\Phi_N(|x|)\ast|u|^p)u, \mbox{\ $x\in   \mathbb{R}^N$}.
\end{equation}
Here $$\Phi_N(|x|)\ast|u|^p=\left\{
\begin{array}{lll}
&\frac{1}{N(N-2)w_N}\int_{\mathbb{R}^N}\frac{|u(y)|^p}{|x-y|^{N-2}}dy,\,\quad N\geq3,\\
&\frac{1}{2\pi}\int_{\mathbb{R}^N}\ln{\frac{1}{|x-y|}}|u(y)|^pdy,  \,\quad N=2
\end{array}\right. $$
and $w_N$ is the volume of the unit ball in $\mathbb{R}^N$.

Equation \eqref{1.4} has several physical origins.
 When $N=3,p=2$, it was first proposed in 1954 by Pekar in describing the quantum mechanics of polaron at rest. Later, in 1976, Choquard used it to model an electron trapped in its own hole, in a certain approximation to Hartree-Fock theory of one-component plasma, see \cite{Lieb}. This equation   also arises in many interesting situations related to the quantum theory of large systems of
nonrelativistic bosonic atoms and molecules  \cite{Lieb-Simon}. One can also see \cite{Palais,pen}  for more physical details.
In mathematical contents, the appearance of the convolution term makes \eqref{1.4} much
more complicated than the classical Schr\"odinger equation, and so the existence and qualitative properties of solutions for \eqref{1.4} have attracted  much attention from researchers. One can refer to \cite{Feliciangeli-Seiringer,Ma-Zhao,Moroz-VanSchaftingen,clapp,Ruiz-VanSchaftingen,Wang-Yi1,Wang-Yi2,xcl,Cassani-Tarsi,
Stubbe,Liu-Radulescu-Tang-Zhang,Guo-Wang-Yi,Liu-Radulescu-Zhang,wangli} and references therein for the existence of ground state solutions,
multiple solutions and nodal solutions to \eqref{1.4} as well as the qualitative properties such as regularity, symmetry, uniqueness and decay.
We point out that when  $p=2$, Wang and Yi \cite{Wang-Yi1} established the uniqueness of positive radial   solutions to \eqref{1.4} by developing Lieb's method in $\mathbb{R}^N$ with $N\geq3$. The existence and uniqueness of positive radially symmetric solution to  \eqref{1.4} in $\mathbb{R}^2$  have been obtained in  \cite{Cingolani-Weth}, see also \cite{Choquard-Stubbe-Vuffray}.  In view of the above works, we address the problem
\begin{itemize}
  \item whether the positive radially symmetric  solution of \eqref{1.4} is  unique up to translations when $p\neq2$?
\end{itemize}
This paper will give a confirmative answer to the case $p\in[1,2]$.

Let $r=|x|$ and $V_\lambda(r))=\Phi_N(|x|)\ast|u|^p$. Then the type $(u_\lambda(r), V_\lambda(r))\in \mathcal C^2(\mathbb{R}^N)\times C^2(\mathbb{R}^N)$ with the positive  radial solution $u_\lambda$ of \eqref{1.4}
satisfies the following corresponding  system of ordinary differential equations
\begin{equation}\label{eq1}
\left\{\begin{aligned}
&u_\lambda^{\prime\prime}+\frac{N-1}{r}u_\lambda^{\prime}=(\gamma V_\lambda+\lambda)u_\lambda,\\
&V_\lambda^{\prime\prime}+\frac{N-1}{r}V_\lambda^{\prime}=u_\lambda^{p}, r\geq0, 1\leq p\leq2.\\
\end{aligned}\right.
\end{equation}
We can assume that $u_\lambda(0)>0, V_\lambda(0)$ is finite and $u_\lambda^{\prime}(0)=V_\lambda^{\prime}(0)=0$.
By direct calculations, we obtain $V_\lambda^{\prime}(r)=\frac{1}{r^{N-1}}\int_0^ru_\lambda^{p}(s)s^{N-1}ds$,
which  implies that $V_\lambda^{\prime}\geq0$. In addition, we have

\begin{equation*}\begin{aligned}
u_\lambda^{\prime\prime}(0)=&\lim_{r\to 0}u_\lambda^{\prime\prime}(r)\\
=&\lim_{r\to 0}\frac{1-N}{r}u_\lambda^{\prime}(r)+\lim_{r\to 0}(\gamma V_\lambda(r)+\lambda)u_\lambda(r)\\
=&\lim_{r\to 0}\frac{(1-N)\int_0^r(\gamma V_\lambda(s)+\lambda)u_\lambda(s)s^{N-1}ds}{r^N}+(\gamma V_\lambda(0)+\lambda)u_\lambda(0)\\
=&\lim_{r\to 0}\frac{(1-N)((\gamma V_\lambda(r)+\lambda)u_\lambda(r)}{N}+(\gamma V_\lambda(0)+\lambda)u_\lambda(0)\\
=&\frac{(\gamma V_\lambda(0)+\lambda)u_\lambda(0)}{N}.
\end{aligned}\end{equation*}
This combined with the fact that $\lim\limits_{|x|\to\infty}u_\lambda(|x|)=0$, implies that
$\lambda+\gamma V_\lambda(0)<0$.
By taking $u(r)=Au_\lambda(\frac{r}{\sigma})$, $V(r)=B[V_\lambda(\frac{r}{\sigma})-V_\lambda(0)]$ with
$$\sigma^{2}=-\lambda-\gamma V_\lambda(0),\quad B=\frac{\gamma}{\sigma^2},\quad A=(\frac{B}{\sigma^2})^\frac{1}{p},$$
we have $$u_\lambda(r)=\frac{1}{A}u(\sigma r),\quad V_\lambda(r)=\frac{1}{B}V(\sigma r)+V_\lambda(0).$$ Then we obtain $$u_\lambda^{\prime}=\frac{\sigma}{A}u^{\prime}(\sigma r), u_\lambda^{\prime\prime}=\frac{\sigma^2}{A}u^{\prime\prime}(\sigma r), V_\lambda^{\prime}=\frac{\sigma}{B}V^{\prime}(\sigma r), V_\lambda^{\prime\prime}=\frac{\sigma^2}{B}V^{\prime\prime}(\sigma r).$$
Inserting these variables  into \eqref{eq1},   we obtain the  following  canonical form
\begin{equation}\label{jihao1}
\left\{\begin{aligned}
&u^{\prime\prime}+\frac{N-1}{r}u^{\prime}=(V-1)u,\\
&V^{\prime\prime}+\frac{N-1}{r}V^{\prime}=u^{p}, 1\leq p\leq2,\\
\end{aligned}\right.
\end{equation}
subject to the initial conditions
\begin{equation}\label{jihao2}
u(0)=u_0\in\mathbb{R}_+, u^{\prime}(0)=0, V(0)=0, V^{\prime}(0)=0.
\end{equation}
By applying similar spirits as in \cite{Choquard-Marc} with necessary modifications, we shall investigate system \eqref{jihao1} under  initial-value conditions \eqref{jihao2} by using shooting method. This method plays a key role in the proof of the uniqueness results, which is started from the the work of Coffman \cite{Coffman1} back to 1972 to study
the following  semilinear elliptic differential equation
\begin{equation}\label{fu}
\Delta u-u+u^3=0, x\in{\mathbb{R}}^N.
\end{equation}
Decades later, it was further developed after  contributions of many authors in the entire space, among whom we mention Kwong \cite{Kwong}, Coffman \cite{Coffman1,Coffman2}, Peletier and Serrin \cite{Peletier-Serrin}, Mc Leod \cite{McLeod}, Tang \cite{Tang1}, Felmer \cite{Felmer-M-T} et al. There have also been many extensions and refinements of this method and we cannot quote them all, but we would like to mention the works by Clemons and Jones \cite{Clemons-Jones}, Erbe and Tang \cite{Erbe-Tang}, Kabeya and Tanaka \cite{Kabeya-Tanaka}.

Our main result is as follows.

\begin{thm}\label{th}
For any $N\geq2$, equation \eqref{1.4} with $\gamma>0$ and $\lambda>0$ admits a unique radially symmetric solution such that $u(|x|)>0$ and
\begin{equation}\label{jihao}
\lim\limits_{|x|\to\infty}u(|x|)=0.
\end{equation}
\end{thm}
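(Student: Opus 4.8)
The plan is to prove the equivalent canonical statement: there is a unique $u_0>0$ for which the solution $(u,V)=(u(\cdot\,;u_0),V(\cdot\,;u_0))$ of \eqref{jihao1}--\eqref{jihao2} stays positive on $[0,\infty)$ with $u(r)\to0$, and then to invert the rescaling of the introduction to recover (and exhaust) the radial solutions of \eqref{1.4}. First I would record the elementary properties of \eqref{jihao1}--\eqref{jihao2}: writing the system in integrated form gives, for each $u_0>0$, a unique $C^{2}$ solution on a maximal interval $[0,T_{u_0})$ depending $C^{1}$ on $u_0$ uniformly on compacta; $r^{N-1}V'(r)=\int_0^r u^{p}s^{N-1}\,ds\ge0$, so $V$ increases from $V(0)=0$; $u''(0)=-u_0/N<0$; and while $u>0$ and $V<1$ one has $(r^{N-1}u')'=r^{N-1}(V-1)u<0$, hence $u'<0$. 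Together with the energy identity $\mathcal E:=\tfrac12(u')^{2}-\tfrac12(V-1)u^{2}$, $\mathcal E'=-\tfrac{N-1}{r}(u')^{2}-\tfrac12 V'u^{2}\le0$, this shows that a solution positive with $u'\le0$ on all of $[0,\infty)$ decreases to some $\ell\ge0$, and $\ell>0$ is impossible (it forces $V\to\infty$, hence $(r^{N-1}u')'>0$ eventually and thus $u'>0$, a contradiction). Therefore ``$u>0$ and $u'\le0$ on $[0,\infty)$'' is equivalent to ``$u>0$ on $[0,\infty)$ and $u(r)\to0$''.

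I would then run the shooting argument. Let $A_1=\{u_0>0:\ u(\cdot\,;u_0)\text{ vanishes somewhere}\}$, $A_2=\{u_0>0:\ u(\cdot\,;u_0)>0\text{ throughout }[0,T_{u_0})\text{ and }u'(r_1;u_0)>0\text{ for some }r_1\}$, and $A_0=(0,\infty)\setminus(A_1\cup A_2)$; by the above, $A_0$ is exactly the set of good shooting data. Continuous dependence shows $A_1$ and $A_2$ are open: the first zero $R$ of a solution in $A_1$ is simple with $u'(R)<0$, so $u<0$ just beyond $R$ for nearby data; and if $u(\cdot\,;u_0)>0$ on $[0,r_1]$ with $u'(r_1;u_0)>0$ then necessarily $V(r_1)>1$, after which $(r^{N-1}u')'>0$ keeps $u$ positive and increasing, so the sign of $u'$ at such a point is an open condition that forces membership in $A_2$. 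Both sets are nonempty: for $u_0$ small, $V$ stays below $\tfrac12$ on a fixed interval and a Sturm comparison with $w''+\tfrac{N-1}{r}w'=-\tfrac12 w$ forces $u$ to vanish, so $u_0\in A_1$; for $u_0$ large, $V$ passes $2$ at some $r_1=O(u_0^{-p/2})$ at which $u$ is still $\approx u_0$, and comparison on $[r_1,\infty)$ with $\varphi''+\tfrac{N-1}{r}\varphi'=\varphi$, whose growing solution dominates, gives $u\ge\varphi\to\infty$, so $u_0\in A_2$. Since $(0,\infty)$ is connected, $A_0\neq\varnothing$: a good solution exists.

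Uniqueness is the heart of the matter, and I would prove it by a Wronskian comparison. Suppose $u_0^{(1)}<u_0^{(2)}$ both lie in $A_0$, with solutions $(u_i,V_i)$, and set $W=u_1u_2'-u_1'u_2$. Using \eqref{jihao1} one computes $W(0)=0$ and $(r^{N-1}W)'=r^{N-1}(V_2-V_1)u_1u_2$. As long as $u_1<u_2$ on $(0,\tau]$ we get $V_1<V_2$ there (since $r^{N-1}V_i'=\int_0^r u_i^{p}s^{N-1}\,ds$ and $x\mapsto x^{p}$ is increasing), hence $W>0$ and $(u_2/u_1)'=W/u_1^{2}>0$; since $u_2/u_1\to u_0^{(2)}/u_0^{(1)}>1$ at $0^{+}$, a first-crossing argument then forces $u_1<u_2$ on all of $(0,\infty)$. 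Consequently $h:=V_2-V_1>0$ on $(0,\infty)$, and $(r^{N-1}h')'=r^{N-1}(u_2^{p}-u_1^{p})>0$ with $h'(0)=0$ makes $h$ increasing, so $V_{2,\infty}\ge V_{1,\infty}+h(1)>1$ (with $V_{2,\infty}=+\infty$ if $N=2$), using $V_{1,\infty}\ge1$ — otherwise $V_1-1\to V_{1,\infty}-1<0$ and a Sturm comparison would make $u_1$ oscillate. Since $V_{2,\infty}>1$, the equation $u_2''+\tfrac{N-1}{r}u_2'=(V_2-1)u_2$ is nonoscillatory with positive asymptotic coefficient, so $u_2$ and $u_2'$ decay exponentially; hence $W$ decays exponentially and $r^{N-1}W\to0$. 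But $r^{N-1}W$ is nondecreasing from $W(0)=0$, so $r^{N-1}W\ge W(1)>0$ for $r\ge1$ — a contradiction. Therefore $A_0$ is a singleton $\{u_0^{\ast}\}$.

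Finally, for the distinguished solution $(u_\ast,V_\ast)$ one checks $V_{\ast,\infty}\ge1$ as above and excludes $V_{\ast,\infty}=1$: this is automatic when $N=2$ (there $\int^{\infty}r^{1-N}\,dr=\infty$ forces $V_\ast\to\infty$) and for $N\ge3$ follows from the asymptotic analysis of \eqref{jihao1}. Then, choosing $\sigma^{2}=\lambda/(V_{\ast,\infty}-1)$, $B=\gamma/\sigma^{2}$, $A=(B/\sigma^{2})^{1/p}$, $V_\lambda(0)=-V_{\ast,\infty}\sigma^{2}/\gamma$, the functions $u_\lambda(r)=A^{-1}u_\ast(\sigma r)$, $V_\lambda(r)=B^{-1}V_\ast(\sigma r)+V_\lambda(0)$ solve \eqref{eq1} with $u_\lambda\to0$ and $V_\lambda(\infty)=0$, hence $u_\lambda$ is the unique positive radial decaying solution of \eqref{1.4} (for $N=2$ one uses instead the rescaling adapted to the logarithmic kernel, cf.\ \cite{Cingolani-Weth}). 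I expect the main obstacle to lie not in the topological bookkeeping of the second step or in the Wronskian identity itself, but in the quantitative comparison estimates that power them — the small- and large-$u_0$ alternatives, the non-oscillation and exponential-decay statements used both there and to close the uniqueness contradiction, and especially the delicate point (sharpest for $N\ge4$, $p\le2$) that the distinguished solution has $V_\infty>1$ rather than $V_\infty=1$. These are presumably the ``new differential inequalities'' on which the proof is built.
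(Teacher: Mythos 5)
Your overall architecture is exactly the paper's: rescale to the canonical system \eqref{jihao1}--\eqref{jihao2}, split the initial data into three shooting classes (your $A_1,A_2,A_0$ are the paper's $\mathcal{N},\mathcal{P},\mathcal{G}$ up to boundary cases), obtain existence from openness of the first two plus connectedness of $(0,\infty)$, and obtain uniqueness from the Wronskian identity $(r^{N-1}\omega)'=r^{N-1}(V_2-V_1)u_1u_2$ combined with $V_\infty\ge1$ and exponential decay. Several of your sub-arguments are genuinely different and serviceable: the Sturm comparison with $w''+\frac{N-1}{r}w'=-\frac12 w$ for small $u_0$ replaces the paper's auxiliary function $\phi=2u+V-\frac12$ of Lemma \ref{lem4}, and the oscillation argument for $V_\infty\ge1$ replaces the Riccati--l'H\^{o}pital computation of Proposition \ref{prop1}.

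The one step with a genuine gap is the large-$u_0$ case ($\mathcal{P}\neq\emptyset$). First, the assertion that when $V$ reaches $2$ at some $r_1=O(u_0^{-p/2})$ one still has $u\approx u_0$ is precisely the nontrivial estimate; the paper proves the needed lower bound $u(r)>u_0-u_0^{1+\frac{p}{2}}r^2/(2N)$ by showing that $\phi_2=u+\lambda_0V-\lambda_0$ with $\lambda_0=u_0^{(2-p)/2}$ is increasing --- this is where the restriction $p\in[1,2]$ enters and is one of the ``new differential inequalities'' advertised in the abstract. Second, your proposed conclusion --- compare on $[r_1,\infty)$ with $\varphi''+\frac{N-1}{r}\varphi'=\varphi$ and claim the growing solution dominates so $u\to\infty$ --- does not follow from $V(r_1)\ge 2$ and $u(r_1)>0$ alone: $u'(r_1)<0$, and the comparison solution with data $(u(r_1),u'(r_1))$ is dominated by the growing mode only if $-u'(r_1)/u(r_1)$ is suitably small; indeed the distinguished element of $\mathcal{G}$ itself eventually has $V>1$ while remaining positive and decreasing, so ``$V>1$ at a point where $u>0$'' cannot by itself force membership in $\mathcal{P}$. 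You would need the additional bound $-u'(r_1)/u(r_1)=O(u_0^{-p/2})$ (which does close the argument via $z'\le z^2-1$ for $z=-u'/u$); the paper instead evaluates $u'(r_1)$ directly from the integral representation \eqref{jihao3} using the lower bound above and shows it is positive. Finally, you are right that returning from the canonical system to \eqref{1.4} requires $1<V_\infty<\infty$ for the distinguished solution --- a point the paper leaves essentially implicit, since Proposition \ref{prop1} only yields $V_\infty\ge1$ --- but your proposal defers that argument as well rather than supplying it.
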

In order to prove Theorem \ref{th}, we need to verify the uniqueness of positive solutions of \eqref{jihao1} with initial-values \eqref{jihao2}. For the initial condition $u_0:=u(0)>0$ of the solution $(u, V)$ to \eqref{jihao1}, we introduce the following three subsets in $\mathbb{R}_+:=(0,\infty)$.
\begin{defn}\label{DY2.1}
\begin{equation}\label{jihao4}
\mathcal{N}=\{u_0\in\mathbb{R}_+:\exists r_0>0 \mbox{ such that }u(r_0)<0 \mbox{ and } u^{\prime}(r)<0\mbox{ on }(0,r_0]\},
\end{equation}
\begin{equation}\label{jihao5}
\mathcal{G}=\{u_0\in\mathbb{R}_+:u\geq0, \lim_{r\to\infty}u(r)=0\},
\end{equation}
\begin{equation}\label{jihao6}
\mathcal{P}=\{u_0\in\mathbb{R}_+:\exists r_1>0 \mbox{ such that }u^{\prime}(r_1)>0 \mbox{ and } u(r)>0\mbox{ on }(0,r_1]\}.
\end{equation}
\end{defn}
\noindent The main difficulties in the proof of \eqref{jihao1} with initial-values \eqref{jihao2} is to obtain the nonemptiness of $\mathcal{N}, \mathcal{P}$ and $ \mathcal{G}$ and the decay of $u$, which shall be overcame by constructing new auxiliary functions and establishing subtle differential inequalities.

The outline of the paper is organized as follows. In Section 2, firstly, we prove the nonemptyness of the subsets $\mathcal{N}, \mathcal{P}$ and $\mathcal{G}$, and then presents their properties. Finally, we obtain the  existence of positive radially symmetric solutions to \eqref{1.4}.
In Section 3, we give the decay estimates and  then establish the  uniqueness of the positive radial solutions by analyzing the Wronskian of solutions of \eqref{jihao1}.
\section{Existence of positive radial solutions}
In this section, we are devoted to the proof of the existence of positive radial solutions  by discussing some general properties of solutions of \eqref{jihao1} with initial values \eqref{jihao2}.

First we give some preliminaries. In view of \eqref{jihao1} and \eqref{jihao2}, we have
\begin{equation}\label{jihao3}
\left\{\begin{aligned}
&u^{\prime}(r)=\frac{1}{r^{N-1}}\int_0^r(V(s)-1)u(s)s^{N-1}ds,\\
&V^{\prime}(r)=\frac{1}{r^{N-1}}\int_0^ru^{p}(s)s^{N-1}ds.\\
\end{aligned}\right.
\end{equation}
Next, we illustrate  the relationship of $\mathcal{N}, \mathcal{P}$ and $\mathcal{G}$.
\begin{lemma}\label{lem1}
$\mathcal{G}$ and $\mathcal{P}$ are disjoint.
\end{lemma}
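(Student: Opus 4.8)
The plan is to show directly from the definitions \eqref{jihao5} and \eqref{jihao6} that no initial value $u_0 \in \mathbb{R}_+$ can lie in both $\mathcal{G}$ and $\mathcal{P}$. Suppose for contradiction that $u_0 \in \mathcal{G} \cap \mathcal{P}$, and let $(u, V)$ be the corresponding solution of \eqref{jihao1}--\eqref{jihao2}. Membership in $\mathcal{P}$ gives some $r_1 > 0$ with $u'(r_1) > 0$ and $u > 0$ on $(0, r_1]$; membership in $\mathcal{G}$ gives $u \geq 0$ everywhere and $u(r) \to 0$ as $r \to \infty$. The idea is that once $u'$ is strictly positive at $r_1$ while $u$ stays nonnegative, the function $u$ cannot turn around and decay to zero, because the nonlinearity $(V-1)u$ on the right-hand side of the first equation in \eqref{jihao1} stays favorable. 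Concretely, from \eqref{jihao3} one has $V'(r) = r^{1-N}\int_0^r u^p(s)s^{N-1}\,ds \geq 0$, so $V$ is nondecreasing with $V(0) = 0$, hence $V \geq 0$ on $[0,\infty)$.

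The key step is to track the sign of $u'$ past $r_1$. I would use the integrating-factor form: $(r^{N-1}u')' = r^{N-1}(V-1)u$. As long as $V(r) \geq 1$ on an interval where $u > 0$, the right-hand side is $\geq 0$, so $r^{N-1}u'$ is nondecreasing there; since it is positive at $r_1$, it stays positive, forcing $u$ to keep increasing — contradicting $u(r)\to 0$. The delicate case is when $V(r_1) < 1$, i.e. $(V-1)u < 0$ at $r_1$, so $u'$ could in principle decrease. Here I would argue that as $u$ increases past $r_1$ (which it does initially, since $u'(r_1)>0$ and $u$ is continuous), the integral defining $V'$ grows, so $V$ increases; and since $u$ is bounded below away from $0$ on $[r_1,\infty)$ as long as it keeps increasing, $V$ must eventually exceed $1$. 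One shows $u$ cannot have an interior maximum before $V$ reaches $1$: at a first critical point $r_2 > r_1$ of $u$ we would need $u'(r_2) = 0$, but $r^{N-1}u'$ evaluated at $r_2$ equals $r_1^{N-1}u'(r_1) + \int_{r_1}^{r_2} s^{N-1}(V(s)-1)u(s)\,ds$, and by choosing the argument carefully (the contribution where $V<1$ is controlled because $u$ stays comparable to $u(r_1)$ while $V$ climbs), this quantity stays positive. Once $V \geq 1$ is reached with $u$ still positive and increasing, the previous paragraph applies and $u$ cannot decay, contradicting $u_0 \in \mathcal{G}$.

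I expect the main obstacle to be the bookkeeping in the case $V(r_1) < 1$: one must quantify how much $u'$ can decrease due to the negative term $(V-1)u$ before $V$ crosses the threshold $1$, and confirm it cannot drive $u'$ to zero first. A clean way to handle this is to introduce the energy-type quantity $E(r) = \tfrac{1}{2}(u')^2 - \tfrac{1}{2}(V-1)u^2 + \tfrac{1}{p+1}$-type corrections, or more simply to exploit monotonicity of $r^{N-1}u'$ piecewise together with the monotone growth of $V$; since $V$ is globally nondecreasing and strictly increasing wherever $u > 0$, the threshold $V = 1$ is reached in finite $r$ unless $u$ already failed to stay positive. If $u$ hits $0$ first, that contradicts $u_0 \in \mathcal{G}$ directly (as $u \geq 0$ would force a double zero and then $u \equiv 0$ by uniqueness for the ODE system, contradicting $u_0 > 0$). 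In all cases we reach a contradiction, so $\mathcal{G} \cap \mathcal{P} = \emptyset$.
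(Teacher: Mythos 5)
Your overall strategy coincides with the paper's: show that membership in $\mathcal{P}$ forces $u$ eventually to increase and stay bounded away from zero, contradicting the decay required by $\mathcal{G}$. The first half of your argument --- once $V\geq 1$ on a region where $u>0$ and $r^{N-1}u'$ is positive, the identity $(r^{N-1}u')'=r^{N-1}(V-1)u\geq 0$ keeps it positive, so $u$ cannot decay --- is correct and is essentially what the paper does. The problem is the ``delicate case'' $V(r_1)<1$, which you leave unresolved: you sketch several possible ways to control the loss of $u'$ while $V$ climbs toward $1$ (``by choosing the argument carefully'', an unspecified energy functional, piecewise monotonicity of $r^{N-1}u'$), but none of them is actually carried out, and as written nothing rules out $u$ reaching an interior maximum before $V$ crosses the threshold. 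That is a genuine gap.

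The paper closes this gap by showing the delicate case is vacuous. Since $u'(0)=0$ and, using \eqref{jihao3}, $u''(0)=\lim_{r\to 0}u''(r)=-u_0/N<0$, every solution starts strictly decreasing from $r=0$. Hence if $u'(r_1)>0$ with $u>0$ on $(0,r_1]$, then $u'$ must vanish at some $r_{\min}\in(0,r_1)$ that is a local minimum of $u$, so $u(r_{\min})>0$, $u'(r_{\min})=0$ and $u''(r_{\min})\geq 0$. Substituting into the first equation of \eqref{jihao1} immediately gives $V(r_{\min})\geq 1$, and since $V$ is strictly increasing wherever $u>0$, we get $V>1$ on $(r_{\min},\infty)$; your ``easy case'' then applies from $r_{\min}$ onward and yields $u(r)>u(r_{\min})>0$ for all $r>r_{\min}$, so $\lim_{r\to\infty}u(r)>0$. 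You should add the computation of $u''(0)$ and the resulting existence of the interior minimum $r_{\min}$: with that observation the case $V(r_1)<1$ never occurs and the quantitative bookkeeping you were worried about becomes unnecessary.
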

\begin{proof}
We claim that for any $u_0\in \mathcal{P}$, the corresponding solution $u$ satisfies
\begin{equation}\label{duanyan1}
\lim\limits_{r\to\infty}u(r)>0.
\end{equation}
In fact, observe that $u(0)=u_0>0, u^{\prime}(0)=0$ and
\begin{equation*}\begin{aligned}
u^{\prime\prime}(0)=&\lim_{r\to 0}u^{\prime\prime}(r)\\
=&\lim_{r\to 0}\frac{1-N}{r}u^{\prime}(r)+\lim_{r\to 0}(V(r)-1)u(r)\\
=&\lim_{r\to 0}\frac{(1-N)\int_0^r(V(s)-1)u(s)s^{N-1}ds}{r^N}-u_0\\
=&\lim_{r\to 0}\frac{(1-N)(V(r)-1)u(r)}{N}-u_0\\
=&-\frac{u_0}{N}<0.
\end{aligned}\end{equation*}
Then all solutions start strictly decreasing from $0$. If $u_0\in\mathcal{P}$, then $u$ must have a local minimum point ${r}_{\min}\in(0,r_1)$ such that
$$u({r}_{\min})>0, u^{\prime}({r}_{\min})=0\mbox{ and }u^{\prime\prime}({r}_{\min})\geq 0.$$
Inserting it into \eqref{jihao1}, we get $V({r}_{\min})\geq1$. Furthermore, since $V$ is a strictly increasing function due to \eqref{jihao3}, we deduce that $V(r)>V({r}_{\min})\geq1$.
By integrating \eqref{jihao1} over $[r_{\min},r]$, we deduce  $$u^{\prime}(r)=\frac{1}{r^{N-1}}\int_{r_{min}}^r(V(s)-1)u(s)s^{N-1}ds>0\mbox{ on }(r_{\min},\infty)$$ and $u(r)>u(r_{\min})>0$ for any $r>r_{\min}$. So the claim \eqref{duanyan1} holds.

Thus $\mathcal{G}$ and $\mathcal{P}$ are disjoint due to the fact that $\lim\limits_{r\to\infty}u(r)=0$ for any $u_0\in\mathcal{G}$. So the proof is completed.
\end{proof}

\begin{rem}\label{remark1}
According to the definition of $\mathcal{N}, \mathcal{G}, \mathcal{P}$ and Lemma \ref{lem1}, we see $\mathcal{N}\cap\mathcal{P}={\o}, \mathcal{N}\cap\mathcal{G}={\o}$ and $\mathcal{G}\cap\mathcal{P}={\o}$, that is, $\mathcal{N},\mathcal{P},\mathcal{G}$ are mutually disjoint sets.
\end{rem}

\begin{lemma}\label{lem2}
$\mathcal{N}\cup\mathcal{G}\cup\mathcal{P}=\mathbb{R}_+.$
\end{lemma}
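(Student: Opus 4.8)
The plan is to establish the non-trivial inclusion $\mathbb{R}_+\subseteq\mathcal{N}\cup\mathcal{G}\cup\mathcal{P}$, the reverse inclusion being immediate from Definition \ref{DY2.1}. I fix $u_0\in\mathbb{R}_+$, let $(u,V)$ be the corresponding solution of \eqref{jihao1}--\eqref{jihao2} on its maximal interval of existence $[0,R)$, and assume $u_0\notin\mathcal{N}$ and $u_0\notin\mathcal{P}$; the aim is to deduce $u_0\in\mathcal{G}$, i.e. $u\geq 0$ on $[0,\infty)$ and $u(r)\to 0$ as $r\to\infty$. One fact is used throughout: since $u(0)=u_0>0$, continuity gives $u>0$ near $0$, so from \eqref{jihao3} we get $V'(r)=r^{1-N}\int_0^r u^p(s)s^{N-1}\,ds>0$ for every $r\in(0,R)$, i.e. $V$ is strictly increasing with $V(0)=0$. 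I will also use, as in the proof of Lemma \ref{lem1}, that $u''(0)=-u_0/N<0$, so $u'<0$ on a right neighbourhood of $0$.

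The first and main step is to show $u>0$ on all of $[0,R)$. Suppose not, and let $T\in(0,R)$ be the first zero of $u$, so $u>0$ on $[0,T)$. I would first prove $u'<0$ on $(0,T)$. Indeed, if some point of $(0,T)$ had $u'=0$, let $\bar r$ be the first such; then $u'<0$ on $(0,\bar r)$, so the auxiliary function $w:=r^{N-1}u'$ satisfies $w<0$ on $(0,\bar r)$ and $w(\bar r)=0$, whence $w'(\bar r)\geq 0$. Since $w'=r^{N-1}(V-1)u$ and $u(\bar r)>0$, this forces $V(\bar r)\geq 1$, and strict monotonicity of $V$ then gives $V>1$ on $(\bar r,R)$, so $w'>0$ just to the right of $\bar r$ (where still $u>0$); hence $u'>0$ there while $u>0$ on the corresponding interval $(0,r_1]$, placing $u_0\in\mathcal{P}$ --- a contradiction. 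Knowing $u'<0$ on $(0,T)$, continuity gives $u'(T)\leq 0$; the possibility $u'(T)=0$ is excluded, because then $u(T)=u'(T)=0$ would force $u\equiv 0$ by uniqueness for the first equation of \eqref{jihao1}, viewed as a linear ODE for $u$ near $r=T$ (where its coefficients are continuous), contradicting $u>0$ on $(0,T)$. Hence $u'(T)<0$, so by continuity there is $r_0\in(T,R)$ with $u(r_0)<0$ and $u'<0$ on $(0,r_0]$, i.e. $u_0\in\mathcal{N}$ --- again a contradiction. Thus $u>0$ on $[0,R)$.

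The second step settles monotonicity and decay. Since $u>0$ on all of $(0,R)$, repeating the $w=r^{N-1}u'$ argument just used, now over all of $(0,R)$, shows that $u$ has no critical point in $(0,R)$, so $u'<0$ there and $0<u\leq u_0$. This bound controls $V'$, hence $V$, hence --- via \eqref{jihao1} --- $u'$ on every bounded subinterval of $[0,R)$, so no finite-time blow-up occurs and $R=\infty$. Being positive and strictly decreasing, $u$ has a limit $L:=\lim_{r\to\infty}u(r)\in[0,u_0]$. If $L>0$, then $u\geq L$ gives $V'(r)\geq L^p r/N$, hence $V(r)\geq L^p r^2/(2N)\to\infty$; picking $r_2$ with $V\geq 2$ on $[r_2,\infty)$ and integrating $(r^{N-1}u')'=r^{N-1}(V-1)u\geq L r^{N-1}$ from $r_2$ forces $r^{N-1}u'(r)\to+\infty$, contradicting $u'<0$. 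Therefore $L=0$, so $u\geq 0$ with $u(r)\to 0$, i.e. $u_0\in\mathcal{G}$. Together with Remark \ref{remark1}, this yields the partition $\mathbb{R}_+=\mathcal{N}\cup\mathcal{G}\cup\mathcal{P}$.

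The step I expect to be the real obstacle is the first one --- ruling out a sign change of $u$ under the assumption $u_0\notin\mathcal{N}\cup\mathcal{P}$. A priori, after its initial descent $u$ could turn around inside the region $\{V<1\}$ and oscillate without ever realizing the configurations defining $\mathcal{N}$ or $\mathcal{P}$. What prevents this is the combination of the quantity $w=r^{N-1}u'$ with the strict monotonicity of $V$: at a first critical point $w$ must satisfy $w'\geq 0$, which pins that point into $\{V\geq 1\}$ and makes it a strict local minimum with $u>0$, thereby handing $u_0$ to $\mathcal{P}$. The minor technical point of excluding $u'(T)=0$ at a first zero of $u$ via ODE uniqueness is routine but genuinely needed.
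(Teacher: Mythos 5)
Your proof is correct, and it takes a noticeably different route from the paper's. The paper first introduces the relaxed sets $\mathcal{N}_1$ and $\mathcal{P}_1$ of \eqref{N1}--\eqref{P1} (with $u'\leq 0$, resp.\ $u\geq 0$, in place of the strict inequalities), asserts that $\mathcal{N}_1\cup\mathcal{G}\cup\mathcal{P}_1=\mathbb{R}_+$ follows ``from the definition'', and then spends its effort showing $\mathcal{N}_1=\mathcal{N}$ and $\mathcal{P}_1=\mathcal{P}$ --- using exactly the two mechanisms you also use: a critical point with $u>0$ forces $V\geq 1$ there, hence $u'>0$ afterwards via $(r^{N-1}u')'=r^{N-1}(V-1)u$; and a double zero $u=u'=0$ forces $u\equiv 0$ by ODE uniqueness. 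You instead prove the trichotomy directly for the strict sets: assuming $u_0\notin\mathcal{N}\cup\mathcal{P}$, you rule out a first zero of $u$, deduce $u'<0$ globally, establish global existence, and show the decreasing limit $L$ must be $0$. What your route buys is precisely the content the paper's ``easy to know'' step sweeps under the rug: membership in $\mathcal{G}$ as defined in \eqref{jihao5} requires both $R=\infty$ and $\lim_{r\to\infty}u(r)=0$, and neither is immediate from the definitions of $\mathcal{N}_1,\mathcal{P}_1,\mathcal{G}$ alone; your Step 2 (boundedness of $(u,u',V,V')$ on bounded intervals, then the contradiction $r^{N-1}u'\to+\infty$ if $L>0$) supplies exactly this. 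What the paper's route buys is brevity and the reusable identities $\mathcal{N}_1=\mathcal{N}$, $\mathcal{P}_1=\mathcal{P}$, which make later openness/limit arguments slightly smoother; but as written its base decomposition is an assertion, not a proof, so your version is the more complete of the two.
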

\begin{proof}
We define
\begin{equation}\label{N1}
\mathcal{N}_1=\{u_0\in\mathbb{R}_+:\exists r_0>0 \mbox{ such that }u(r_0)<0 \mbox{ and } u^{\prime}(r)\leq0\mbox{ on }(0,r_0]\},
\end{equation}
\begin{equation}\label{P1}
\mathcal{P}_1=\{u_0\in\mathbb{R}_+:\exists r_1>0 \mbox{ such that }u^{\prime}(r_1)>0 \mbox{ and } u(r)\geq0\mbox{ on }(0,r_1]\}.
\end{equation}
It is easy to know that $\mathcal{N}_1\cup\mathcal{G}\cup\mathcal{P}_1=\mathbb{R}_+$ from the definition of $\mathcal{N}_1, \mathcal{P}_1, \mathcal{G}$. We claim
\begin{equation}\label{NGP} \mathcal{N}\cup\mathcal{G}\cup\mathcal{P}=\mathcal{N}_1\cup\mathcal{G}\cup\mathcal{P}_1=\mathbb{R}_+.
\end{equation}

In order to prove \eqref{NGP},  we first claim $\mathcal{N}_1=\mathcal{N}.$
We just have to prove that $\mathcal{N}_1\backslash\mathcal{N}={\o}$. Otherwise,
for any $u_0\in\mathcal{N}_1\backslash\mathcal{N}$, there exists $r_{01}\in(0,r_0)$ such that $u^{\prime}(r_{01})=0$ and $u^{\prime}(r)<0$ on $(0,r_{01})$.
If $u(r_{01})>0$, then we have $u^{\prime\prime}(r_{01})\geq0$ because
$$u^{\prime\prime}(r_{01})=u_{-}^{\prime\prime}(r_{01})=\lim\limits_{r\to r_{01}^-}\frac{u^{\prime}(r)-u^{\prime}(r_{01})}{r-r_{01}}\geq0.$$
Inserting it into \eqref{jihao1}, we get $V(r_{01})\geq1$. Furthermore, $V(r)>V(r_{01})\geq1$ on $(r_{01},\infty)$. Finally, by integrating \eqref{jihao1} over $[r_{01},r]$,  we obtain $$u^{\prime}(r)=\frac{1}{r^{N-1}}\int_{r_{01}}^r(V(s)-1)u(s)s^{N-1}ds>0\mbox{ on}(r_{01},\infty),$$ which contradicts with  $u(r_0)<0$. If $u(r_{01})=0$,  we suppose that $V(r_{01})=V_{01}$ and $V^{\prime}(r_{01})=\bar V_{01}$. Then $(0,V_{01}+\frac{\bar V_{01}}{2-N}r_{01}^{N-1}r^{2-N}-\frac{V_{01}}{2-N}r_{01})$ is the unique solution of
\begin{equation}\label{xin}
\left\{\begin{aligned}
&u^{\prime\prime}+\frac{N-1}{r}u^{\prime}=(V-1)u,\\
&V^{\prime\prime}+\frac{N-1}{r}V^{\prime}=u^{p},\\
&u(r_{01})=u^{\prime}(r_{01})=0, V(r_{01})=V_{01}, V^{\prime}(r_{01})=\bar V_{01},\\
\end{aligned}\right.
\end{equation}
which contradicts with  $u(r_0)<0$ according to \eqref{N1}. Then $\mathcal{N}_1\backslash\mathcal{N}={\o}$. That is to say $\mathcal{N}_1=\mathcal{N}$.\\
\indent Next we claim $\mathcal{P}_1=\mathcal{P}.$
We just have to prove that $\mathcal{P}_1\backslash\mathcal{P}={\o}$. Otherwise,
for any $u_1\in\mathcal{P}_1\backslash\mathcal{P}$, there exists $r_{02}\in(0,r_1)$ such that $u(r_{02})=0$, which is the minimum point and thereby $u^{\prime}(r_{02})=0$. By using similar arguments as in \eqref{xin}, it follows that there is only the zero solution due to $u(r_{02})=u^{\prime}(r_{02})=0$, which leads to a contradiction with $u^{\prime}(r_1)>0$ because $r_1>r_{02}$. The claim holds.

From the above statement, we obtain $$\mathcal{N}\cup\mathcal{G}\cup\mathcal{P}=\mathcal{N}_1\cup\mathcal{G}\cup\mathcal{P}_1=\mathbb{R}_+.$$
Therefore, the proof is completed.
\end{proof}

\begin{lemma}\label{lem7}
$\mathcal{N}, \mathcal{P}$ are open sets.
\end{lemma}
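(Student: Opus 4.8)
The strategy is to combine continuous dependence of the solution $(u,V)$ of \eqref{jihao1}--\eqref{jihao2} on the shooting parameter $u_0$ with the elementary observation that membership in $\mathcal{N}$ or in $\mathcal{P}$ is governed by \emph{strict} inequalities imposed on a \emph{compact} range of radii: namely $u(r_0)<0$ together with $u'<0$ on $(0,r_0]$ for $\mathcal{N}$, and $u'(r_1)>0$ together with $u>0$ on $(0,r_1]$ for $\mathcal{P}$. Such conditions are stable under small perturbations of $u_0$. The only delicate point is the behaviour near the regular--singular point $r=0$, because the sign conditions are prescribed on the half-open intervals $(0,r_0]$, $(0,r_1]$ rather than on $[0,r_0]$, $[0,r_1]$.

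First I would record the needed continuity facts. Rewriting \eqref{jihao1}--\eqref{jihao2} in the integral form
\begin{equation*}
u(r)=u_0+\int_0^r t^{1-N}\!\!\int_0^t (V(s)-1)u(s)s^{N-1}\,ds\,dt,\qquad
V(r)=\int_0^r t^{1-N}\!\!\int_0^t u^{p}(s)s^{N-1}\,ds\,dt ,
\end{equation*}
a standard contraction-mapping plus Gronwall argument gives: (i) if the solution associated with $u_0$ exists on a compact interval $[0,R]$, then so does the solution $(\tilde u,\tilde V)$ associated with any $\tilde u_0$ sufficiently close to $u_0$; and (ii) $\tilde u\to u$, $\tilde u'\to u'$ and $\tilde V\to V$ uniformly on $[0,R]$ as $\tilde u_0\to u_0$. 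Moreover, by \eqref{jihao3} the function $g(r):=u'(r)/r$ for $r>0$, $g(0):=u''(0)=-u_0/N$, satisfies $g(r)=r^{-N}\int_0^r(V(s)-1)u(s)s^{N-1}\,ds$; hence $g$ is continuous on $[0,R]$ and, using (ii),
\begin{equation*}
\|\tilde g-g\|_{L^\infty(0,R)}\le N^{-1}\,\big\|(\tilde V-1)\tilde u-(V-1)u\big\|_{L^\infty(0,R)}\longrightarrow 0\quad\text{as }\tilde u_0\to u_0 .
\end{equation*}

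Now let $u_0\in\mathcal{N}$ with witness $r_0>0$. Since $u'<0$ on $(0,r_0]$ and $g(0)=-u_0/N<0$, the continuous function $g$ is strictly negative on the compact set $[0,r_0]$, hence $g\le -c$ there for some $c>0$. By the uniform convergence above, for $\tilde u_0$ close to $u_0$ we have $\tilde g<0$ on $[0,r_0]$, i.e. $\tilde u'<0$ on $(0,r_0]$, while $\tilde u(r_0)<0$ because $u(r_0)<0$ strictly; thus $\tilde u_0\in\mathcal{N}$. Similarly, if $u_0\in\mathcal{P}$ with witness $r_1>0$, then $u>0$ on $(0,r_1]$ and $u(0)=u_0>0$, so $u$ is strictly positive on the compact set $[0,r_1]$ and bounded below there by some $c>0$; by (ii), for $\tilde u_0$ close to $u_0$ we get $\tilde u>0$ on $[0,r_1]$ and $\tilde u'(r_1)>0$ (since $u'(r_1)>0$ strictly), whence $\tilde u_0\in\mathcal{P}$. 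Therefore $\mathcal{N}$ and $\mathcal{P}$ are open.

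The step I expect to be the main obstacle is precisely the passage across $r=0$: classical continuous dependence is stated on compact $r$-intervals, whereas the defining conditions only control the sign of $u'$ on the punctured interval $(0,r_0]$, so one must upgrade this to a statement on the full closed interval $[0,r_0]$. The auxiliary quotient $g(r)=u'(r)/r$ (equivalently, the expansion $u(r)=u_0-\tfrac{u_0}{2N}r^2+o(r^2)$) is what accomplishes this, turning an open condition near the origin into a closed-interval statement to which the uniform convergence in (ii) applies directly.
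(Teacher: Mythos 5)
Your proof is correct and follows the same route as the paper, which simply invokes continuous dependence of the solution on the initial datum $u_0$ together with the fact that membership in $\mathcal{N}$ and $\mathcal{P}$ is defined by strict inequalities. In fact your version is more careful than the paper's: the paper only checks that the single pointwise condition ($u(r_0)<0$, resp.\ $u'(r_1)>0$) survives perturbation, whereas you also verify stability of the sign condition on the punctured interval $(0,r_0]$ (resp.\ $(0,r_1]$), using the continuous quotient $g(r)=u'(r)/r$ with $g(0)=-u_0/N<0$ to turn it into a strict inequality on a compact interval to which uniform convergence applies.
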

\begin{proof}
Let $u_0\in\mathcal{N}$. The corresponding solution $u$ takes a negative value at some point $r=r_0$. By  the continuity theorem of solution on initial conditions, there exists $\delta$ such that $B_\delta(u_0)\subset\mathcal{N}$ for each $u_0\in\mathcal{N}$. So $\mathcal{N}$ is an open set.
Let $u_0\in\mathcal{P}$. Then $u^{\prime}(r_0)>0$ for some point $r=r_0$.  It follows from the continuity theorem of solution on initial conditions again that there exists $\varepsilon$ such that $B_\varepsilon(u_0)\subset\mathcal{P}$ for each $u_0\in\mathcal{P}$.   So we obtain $\mathcal{P}$ is an open set.
\end{proof}

In the following, we are ready to prove the nonemptiness of the three subsets $\mathcal{N},\mathcal{G},\mathcal{P}$, which is a novel point in this work.
\begin{lemma}\label{lem3}
The set $\mathcal{G}$ is non-empty and there is a solution $(u,V)$  of system \eqref{jihao1} subject to the initial conditions \eqref{jihao2} such that $u(r)>0,u^{\prime}(r)<0$ on $(0,\infty)$ and $\lim\limits_{r\to\infty}u(r)=0$.
\end{lemma}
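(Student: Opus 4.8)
The plan is to run the classical shooting dichotomy. By Lemma~\ref{lem1}, Remark~\ref{remark1} and Lemma~\ref{lem7}, the sets $\mathcal N$ and $\mathcal P$ are open and mutually disjoint, and by Lemma~\ref{lem2} they satisfy $\mathcal N\cup\mathcal G\cup\mathcal P=\mathbb R_+$. Since $\mathbb R_+$ is connected, it cannot be written as a disjoint union of the two open sets $\mathcal N$ and $\mathcal P$ unless one of them is empty; hence, once I show that both $\mathcal N$ and $\mathcal P$ are non-empty, the complement $\mathcal G=\mathbb R_+\setminus(\mathcal N\cup\mathcal P)$ is non-empty. So the argument reduces to: (i) $\mathcal N\neq\emptyset$; (ii) $\mathcal P\neq\emptyset$; (iii) the qualitative statement for $u_0\in\mathcal G$.

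For (i) I would take $u_0$ small. From \eqref{jihao3}, as long as $u>0$ one has $u'<0$ (so $u\le u_0$) and hence $V(r)\le u_0^{p}r^{2}/(2N)$, so $V<\tfrac12$ on an interval $[0,R_0]$ with $R_0=\sqrt{N/u_0^{p}}\to\infty$ as $u_0\to0$. On that interval the coefficient $q:=1-V\ge\tfrac12$, and writing the $u$-equation of \eqref{jihao1} in self-adjoint form $(r^{N-1}u')'+q\,r^{N-1}u=0$, I compare it with the Bessel-type equation $(r^{N-1}v')'+\tfrac12 r^{N-1}v=0$, $v(0)>0$, $v'(0)=0$, whose first positive zero $z_1$ is finite and independent of $u_0$. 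Choosing $u_0$ so small that $R_0>z_1$ and using the Wronskian $W(r)=r^{N-1}(u'v-uv')$, for which $W(0)=0$ and $W'=(\tfrac12-q)r^{N-1}uv\le0$, forces a contradiction at $r=z_1$ (where $v(z_1)=0$, $v'(z_1)<0$) unless $u$ vanishes on $(0,z_1]$; since $u$ is still strictly decreasing where it first vanishes (as $V<1$ there), that $u_0$ lies in $\mathcal N$.

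For (ii) I would take $u_0$ large. First, from $(r^{N-1}u')'=r^{N-1}(V-1)u\ge -r^{N-1}u_0$ while $0<u\le u_0$, I get $u'(r)\ge-u_0 r/N$ and hence $u(r)\ge u_0(1-r^2/(2N))\ge u_0/2$ on $[0,\sqrt N]$ as long as $V<1$ there; feeding this back into $V'$ gives $V(r)\ge (u_0/2)^p r^2/(2N)$, so $V$ attains the value $1$ at some radius $r_V=O(u_0^{-p/2})\to0$ with $u(r_V)\ge u_0/2$. Beyond $r_V$ the term $(V-1)u$ is strictly positive, so $r^{N-1}u'$ is increasing; a quantitative version of these bounds (controlling $|u'(r_V)|\le u_0 r_V/N$ against the quadratic lower bound on $V-1$ just past $r_V$, while showing $u$ stays above, say, $u_0/4$) shows that $r^{N-1}u'$ turns positive at a radius $O(u_0^{-p/2})$ with $u>0$ up to that point; hence $u_0\in\mathcal P$. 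I expect step (ii) to be the main obstacle, precisely because one must rule out the competing scenario in which $u$ plunges to zero before the repulsive term $(V-1)u$ can reverse its slope — this is exactly what the lower bound $u\ge u_0/2$ near the origin, together with the quadratic lower bound on $V$, is designed to prevent; the bookkeeping of the $u_0$-dependence of $r_V$, $u'(r_V)$ and the growth of $V$ is the delicate part.

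Finally, for (iii) fix any $u_0\in\mathcal G$, so $u\ge0$ and $u(r)\to0$. If $u(r^*)=0$ for some $r^*>0$, then $r^*$ is a minimum, so $u'(r^*)=0$; since the $u$-equation of \eqref{jihao1} is linear in $u$ for given $V$, the Cauchy data $u(r^*)=u'(r^*)=0$ force $u\equiv0$, contradicting $u(0)=u_0>0$. Hence $u>0$ on $[0,\infty)$. For monotonicity, $u'(0)=0$ and $u''(0)=-u_0/N<0$ (as computed in the proof of Lemma~\ref{lem1}) give $u'<0$ near $0$; if $u'(r_1)=0$ at the first such radius, then $u''(r_1)\ge0$ and \eqref{jihao1} yield $V(r_1)\ge1$, so $V>1$ on $(r_1,\infty)$, whence $(r^{N-1}u')'=r^{N-1}(V-1)u>0$ there and $u'(r)>0$ for $r>r_1$ with $u>0$ on $(0,r]$ — that is, $u_0\in\mathcal P$, contradicting $\mathcal G\cap\mathcal P=\emptyset$. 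Therefore $u'<0$ on $(0,\infty)$, which completes the argument.
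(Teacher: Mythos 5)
Your proposal is correct in outline and, for the qualitative part (iii), essentially coincides with the paper's own argument (positivity via ODE uniqueness at a putative zero with $u=u'=0$, monotonicity via $V(r_1)\ge 1$ at a first critical point forcing $u'>0$ afterwards). Where you genuinely diverge is in how the nonemptiness of $\mathcal N$ and $\mathcal P$ is obtained. The paper does not prove these inside Lemma~\ref{lem3} at all: it simply cites Lemmas~\ref{lem2} and~\ref{lem7} and defers nonemptiness to the \emph{later} Lemmas~\ref{lem4} and~\ref{lem5}, which work with the auxiliary linear combinations $\phi=2u+V-\tfrac12$ (shown to be decreasing, giving $V\le 2u_0<1$ and then a Riccati blow-up for $z=-u'/u$) and $\phi_2=u+\lambda_0V-\lambda_0$ with $\lambda_0=u_0^{(2-p)/2}$ (shown to be increasing, giving the clean lower bound $u>u_0-\lambda_0V$). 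Your connectedness argument is the same dichotomy, but stated more carefully than the paper's (the paper's one-line deduction is logically premature as written, since it silently uses results proved afterwards). Your step (i) via Sturm comparison of $(r^{N-1}u')'+(1-V)r^{N-1}u=0$ with the Bessel-type equation on the region where $V<\tfrac12$ is a complete and valid alternative to the paper's $\phi$-trick, and arguably more transparent. Your step (ii) is the only soft spot: the strategy (locate $r_V\sim u_0^{-p/2}$ where $V$ reaches $1$, bound $|u'(r_V)|\lesssim u_0 r_V$, keep $u\ge u_0/4$, and let the now-positive forcing $(V-1)u$ reverse the slope) is sound and the claimed conclusion is true, but the bookkeeping you defer is genuinely delicate: evaluating $u'$ at $2r_V$ with the crude bounds $u\ge u_0/4$ and $V\ge u^pr^2/(2N)$ yields a ratio of positive to negative contributions that is only a constant of order $N/(4^{p+1}(N+2))<1$, so it does not close there; one must evaluate at $r_1=Kr_V$ with $K=K(N,p)$ large (which is still admissible since $u\ge u_0/4$ persists on $[0,Kr_V]$ for $u_0$ large), at which point the ratio scales like $K^2$ and the argument goes through. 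The paper's $\phi_2$ device is precisely what packages this computation into a single explicit integral $2^NNu_0^{p/2}\int_0^{1/2}(1-s^2)^{p+1}s^{N+1}\,ds-1\to+\infty$, avoiding the constant-chasing.
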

\begin{proof}
According to Lemmas \ref{lem2} and  \ref{lem7}, we can deduce that $\mathcal{G}$ is nonempty. Furthermore, we claim
$$u(r)>0\mbox{ on }(0,\infty).$$
In fact, suppose on the contrary that there exists $\bar{r}>0$ such that $u(r)>0$ in $(0,\bar{r})$ and $u(\bar{r})=0$, which is the minimum point in $\mathbb{R}_+$ and thereby $u^{\prime}(\bar{r})=0$. Then we suppose that $V(\bar{r})=\bar{V}$ and $V^{\prime}(\bar{r})=\bar{V}^{\prime}$. Then $(0,\bar{V}+\frac{\bar{V}^{\prime}}{2-N}\bar{r}^{N-1}r^{2-N}-\frac{\bar{V}}{2-N}\bar{r})$ is the unique solution of
\begin{equation*}
\left\{\begin{aligned}
&u^{\prime\prime}+\frac{N-1}{r}u^{\prime}=(V-1)u,\\
&V^{\prime\prime}+\frac{N-1}{r}V^{\prime}=u^{p},\\
&u(\bar{r})=u^{\prime}(\bar{r})=0, V(\bar{r})=\bar{V}, V^{\prime}(\bar{r})=\bar{V}^{\prime}.\\
\end{aligned}\right.
\end{equation*}
By the existence and uniqueness of the solution, it follows that there is only the zero solution duo to $u(\bar{r})=u^{\prime}(\bar{r})=0$, which is a contradiction with $u_0\neq0$. So the claim is proved.

Next we claim
$$u^{\prime}(r)<0\mbox{ on }(0,\infty).$$
Suppose that there exist the first critical point $r_1$ such that $u^{\prime}(r_1)=0$. It is easy to know that $u^{\prime\prime}(r_1)\geq0$. This together with \eqref{jihao1},  yields that  $V(r_1)\geq1.$ Furthermore, $V(r)>V(r_1)\geq1$ for $r>r_1$. Then by integrating \eqref{jihao1} over $[r_1,r]$, we get $$u^{\prime}(r)=\frac{1}{N-1}\int_{r_1}^r(V(s)-1)u(s)s^{N-1}ds>0\mbox{ on }(r_1,\infty).$$ We obtain a contradiction  with $u(\infty)=0$. So the claim is proved.

Thus if $u_0\in\mathcal{G}$, then $u(r)>0,u^{\prime}(r)<0$ on $(0,\infty)$ and $\lim\limits_{r\to\infty}u(r)=0$.
\end{proof}

\begin{lemma}\label{lem4}
The sets $\mathcal{N}$ is nonempty. In particular, $(0,\frac{1}{4})\subset\mathcal{N}.$
\end{lemma}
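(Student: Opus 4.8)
The plan is to show that every $u_0\in(0,\tfrac14)$ belongs to $\mathcal N$, i.e.\ (by \eqref{jihao4}) that the corresponding solution $u$ becomes negative while $u'$ is still negative. I would organize this in three stages: cheap a priori control, a comparison/oscillation argument forcing a sign change, and a short argument that the sign change actually lands in $\mathcal N$.

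First I would record the elementary structural facts. As in the proof of Lemma~\ref{lem1}, $u''(0)=-u_0/N<0$, so $u$ is strictly decreasing near $r=0$. From the first identity in \eqref{jihao3}, $(r^{N-1}u')'=r^{N-1}(V-1)u$, so as long as $u>0$ and $V<1$ the function $r^{N-1}u'$ is strictly decreasing with $r^{N-1}u'\to0$ as $r\to0^+$; hence $u'<0$ there and therefore $0<u\le u_0<\tfrac14$. Since $p\ge1$ and $0<u_0<1$, this gives $u^p\le u_0^p\le u_0<\tfrac14$, and the second identity in \eqref{jihao3} yields $V'(r)\le r/(4N)$, so $V(r)\le r^2/(8N)$. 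A bootstrap then shows $V<\tfrac12$ on $[0,2\sqrt N\,)$ as long as $u$ stays positive there. The heart of the matter is to prove that the first zero $r_0$ of $u$ occurs before $V$ reaches $1$.

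For this I would argue by comparison. Suppose $u>0$ on $[0,2\sqrt N]$; by the previous paragraph $V\le\tfrac12$ there, so $u''+\frac{N-1}{r}u'=(V-1)u\le-\tfrac12 u$, and the Liouville substitution $w=r^{(N-1)/2}u$ turns this into $w''+q(r)w\le0$ with $q(r)=\tfrac12-\frac{(N-1)(N-3)}{4r^2}$ (for $N=2$ the last term has the opposite sign and $q>\tfrac12$). Comparing $w$ with $\sin\!\big(\sqrt\mu\,(r-a)\big)$ over an interval $[a,a+\pi/\sqrt\mu]\subset[0,2\sqrt N]$ on which $q\ge\mu>0$ produces the standard Wronskian contradiction, so $u$ must vanish at some $r_0<2\sqrt N$, where $V(r_0)<\tfrac12<1$. (An equivalent route is to feed $V(r)\le r^2/(8N)$ directly into $u''+\frac{N-1}{r}u'\le(r^2/(8N)-1)u$ and compare with an explicitly integrable auxiliary equation.) The explicit threshold $u_0<\tfrac14$ enters precisely here: it is what keeps the bound on $V$ small enough that a usable comparison window $[a,a+\pi/\sqrt\mu]$ survives before $V$ can climb past $1$.

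It then remains to deduce membership in $\mathcal N$. By the above, $u>0$ and $V<1$ on $[0,r_0)$, so $u'<0$ on $(0,r_0)$; at $r_0$ one has $u(r_0)=0$ and $u'(r_0)\le0$, and in fact $u'(r_0)\ne0$, since otherwise $u$ would solve the linear-in-$u$ equation $u''+\frac{N-1}{r}u'-(V-1)u=0$ with vanishing Cauchy data at the regular point $r_0>0$, forcing $u\equiv0$ and contradicting $u_0>0$. Hence $u'(r_0)<0$, so $u<0$ and $u'<0$ on a right neighbourhood of $r_0$, which places $u_0$ in $\mathcal N$ by \eqref{jihao4}. The hard part is the comparison step: converting the crude estimate ``$V$ is small'' into ``$u$ changes sign before $V$ reaches $1$''; the delicate point is that the Sturm window requires $q(r)\ge\mu$ on an interval of length exceeding $\pi/\sqrt\mu$, and keeping such a window open is exactly what pins down the admissible range of $u_0$.
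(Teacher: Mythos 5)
Your outline---control $V$ while $u$ is positive and decreasing, then force a sign change of $u$ before $V$ reaches $1$---is reasonable, but the step that is supposed to force the sign change does not close, and that is precisely where the difficulty of the lemma sits. From $u\le u_0<\tfrac14$ and $p\ge1$ you only obtain the \emph{local} bound $V(r)\le u_0^p r^2/(2N)\le r^2/(8N)$, so the set on which your estimates certify $q(r)\ge\mu$ is contained in $\{r:\ \tfrac{r^2}{8N}+\tfrac{(N-1)(N-3)}{4r^2}\le 1-\mu\}$. A Sturm window needs length $\pi/\sqrt{\mu}$, which (already ignoring the singular term) forces $8N\mu(1-\mu)\ge\pi^2$; the left-hand side is at most $2N$, so this fails for $N=2,3,4$. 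For large $N$ the term $\tfrac{(N-1)(N-3)}{4r^2}$ pushes the admissible radii out to $r\gtrsim N$, far beyond $2\sqrt{2N}$, and for $N\ge 12$ the set where your bounds give $q>0$ is empty. Your parenthetical alternative (inserting $V\le r^2/(8N)$ and comparing with an integrable equation) meets the same obstruction: the phase integral $\int\sqrt{1-V}\,dr$ over the controlled interval is at best borderline equal to $\pi$ and does not absorb the damping $\tfrac{N-1}{r}u'$. Finally, even if one granted $V<1$ on all of $(0,\infty)$, you would still need to exclude $u_0\in\mathcal G$ (i.e.\ $u>0$ everywhere with $u\to0$), which your argument never addresses because it presupposes the existence of a first zero $r_0$.

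The paper circumvents all of this by producing a bound on $V$ that is \emph{global} in $r$: assuming $u_0\notin\mathcal N$ (so $u>0$ everywhere by Lemmas \ref{lem2}--\ref{lem3}), the auxiliary function $\phi=2u+V-\tfrac12$ satisfies $\phi''+\tfrac{N-1}{r}\phi'<2\phi u$, is negative at $r=0$ precisely because $u_0<\tfrac14$, and a minimum-point argument shows it is decreasing; hence $V\le 2u_0<\tfrac12$ for \emph{all} $r$. This instantly rules out $u_0\in\mathcal P$ (which forces $V\ge1$ at an interior minimum of $u$), and for $u_0\in\mathcal G$ the Riccati quantity $z=-u'/u$ satisfies $z'\ge\tfrac12 z^2$ for large $r$, so $1/z$ must become negative in finite time, a contradiction. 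The missing ingredient in your write-up is exactly this global control of $V$ (or a substitute for it); your final paragraph, upgrading a first zero with $V(r_0)<1$ to membership in $\mathcal N$, is fine as it stands.
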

\begin{proof}
We prove it by contradiction. For any $u_0\in(0,\frac{1}{4})$ and suppose $u_0\notin\mathcal{N}$, then $u_0\in\mathcal{G}\cup\mathcal{P}$.  Then we claim $u>0\mbox{ on }(0,\infty).$
In fact, if $u_0\in\mathcal{G}$, then  we have $u>0$ by using Lemma \ref{lem3}. If $u_0\in\mathcal{P}$, there exists $r_{\min}$ such that $u(r_{\min})>0, u^{\prime}(r_{\min})=0$ and $u^{\prime\prime}(r_{\min})\geq0$. Inserting it into \eqref{jihao1}, we get $V(r_{\min})\geq1$. Furthermore, $V(r)>V(r_{\min})\geq1$. Then by integrating \eqref{jihao1} over $[r_{\min},r]$, we obtain $$u^{\prime}(r)=\frac{1}{N-1}\int_{r_{\min}}^r(V(s)-1)u(s)s^{N-1}ds>0\mbox{ on }(r_{\min},\infty).$$ Thus $u(r)>u(r_{\min})>0$. So the claim is proved.

Now we consider the function
\begin{equation}\label{jihao7}
\phi=2u+V-\frac{1}{2}.
\end{equation}
Since $p\in[1,2]$,   $\phi$ satisfies the differential inequality equation
\begin{equation}\label{jihao11}
\phi^{\prime\prime}+\frac{N-1}r\phi^{\prime}=2(V-1)u+u^{p}=2\phi u-4u^2-u+u^{p}<2\phi u.
\end{equation}
Obviously, $\phi(0)=2u_0-\frac{1}{2}=2(u_0-\frac{1}{4})<0, \phi^{\prime}(0)=0$ and
\begin{equation*}\begin{aligned}
\phi^{\prime\prime}(0)=&2u^{\prime\prime}(0)+V^{\prime\prime}(0)\\
=&2\lim_{r\to 0}u^{\prime\prime}(r)+\lim_{r\to 0}V^{\prime\prime}(r)\\
=&2\lim_{r\to 0}\frac{1-N}{r}u^{\prime}(r)+2\lim_{r\to 0}(V(r)-1)u(r)+\lim_{r\to 0}\frac{1-N}{r}V^{\prime}(r)+\lim_{r\to 0}u^p(r)\\
=&2\lim_{r\to 0}\frac{(1-N)\int_0^r(V(s)-1)u(s)s^{N-1}ds}{r^N}-2u_0+\lim_{r\to 0}\frac{(1-N)\int_0^ru^p(s)s^{N-1}ds}{r^N}+u_0^p\\
=&2\lim_{r\to 0}\frac{(1-N)(V(r)-1)u(r)}{N}-2u_0+\lim_{r\to 0}\frac{(1-N)u^p(r)}{N}+u_0^p\\
=&-2\frac{u_0}{N}+\frac{u_0^p}{N}\\
=&\frac{u_0}{N}(u_0^{p-1}-2)<0.
\end{aligned}\end{equation*}
Then all solutions start strictly decreasing from 0.\\
\indent We claim that $\phi$ is decreasing on $(0,\infty)$. Indeed, if it's not true, there must be a local minimum point
$r_\ast>0$ such that
\begin{equation}\label{jihao12}
\phi(r_\ast)<\phi(0)<0, \phi^{\prime}(r_\ast)=0, \phi^{\prime\prime}(r_\ast)\geq0.
\end{equation}
Inserting $\phi(r_\ast)<0\mbox{ and }\phi^{\prime}(r_\ast)=0$ into \eqref{jihao11}, we get $$\phi^{\prime\prime}(r_\ast)<2\phi(r_\ast)u(r_\ast)<0,$$which leads to a contradiction with \eqref{jihao12}.
So the claim is proved.

By the claim, we deduce from \eqref{jihao7} that $2u(r)+V(r)\leq2u_0$ on $(0,\infty)$.
Obviously,
\begin{equation}\label{deduce2}
V(r)\leq2u_0<1\mbox{ on }(0,\infty).
\end{equation}
Hence $V(r)$ is bounded. Since $V$ is always strictly increasing,  $\lim\limits_{r\to\infty}V(r)= :V_\infty$ exists and $V_\infty\leq2u_0<1$.
We consider the function $$z=-\frac{u^{\prime}}{u}.$$ It satisfies the differential equation
\begin{equation}\label{z}
z^{\prime}=z^2-\frac{N-1}rz+1-V.
\end{equation}

If $u_0\in\mathcal{G}$,  it follows that $z$ exists for all $r>0$ and $z(r)>0$ for all $r>0$.
Choose $\tilde{r}$ such that $\frac{N-1}{r}\leq\frac{N-1}{\tilde{r}}\leq\sqrt{2(1-V_\infty)}$ for all $r\geq\tilde{r}$. Then we have
%Inserting \eqref{deduce2} into \eqref{z}, we get
\begin{equation*}\begin{aligned}
z^{\prime}\geq&\frac{1}{2}z^2+(\frac{1}{2}z^2-\sqrt{2(1-V_\infty)}z+1-V_\infty)\\
=&\frac{1}{2}z^2+\frac{1}{2}(z-\sqrt{2(1-V_\infty)})^2\\
\geq&\frac{1}{2}z^2.
\end{aligned}\end{equation*}
on $(\tilde{r},\infty)$.
Thus we have $$\frac{z^{\prime}}{z^2}\geq\frac{1}{2}\mbox{ on }(\tilde{r},\infty).$$ Furthermore, we have
\begin{equation}\label{333}
(\frac{1}{z}+\frac{r}{2})^{\prime}\leq0\mbox{ on }(\tilde{r},\infty).
\end{equation}
Integrating \eqref{333} over$(\tilde{r},r)$, we obtain $$\int_{\tilde{r}}^r(\frac{1}{z}+\frac{r}{2})^{\prime}\leq0.$$
So we see $$\frac{1}{z(r)}\leq\frac{1}{z(\tilde{r})}+\frac{1}{2}(\tilde{r}-r)\mbox{ on }(\tilde{r},\infty).$$
This shows that $z(r)$ is negative at some $r$ large enough, which contradicts with $z(r)>0$. Thus $u_0\notin \mathcal{G}$.

If $u_0\in\mathcal{P}$, from the above statement we have $V(r)>1$ on $(r_{\min},\infty)$, which contradicts with \eqref{deduce2}. Thus $u_0\notin \mathcal{P}$. The above arguments yield a contradiction with Lemma \ref{lem2}.

Hence, $u_0\in\mathcal{N}$. In particular, $(0,\frac{1}{4})\subset\mathcal{N}$.
\end{proof}

\begin{lemma}\label{lem5}
For $u_0$ large enough, $u_0\in\mathcal{P}$.
\end{lemma}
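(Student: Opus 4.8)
The plan is to follow the shooting solution $(u,V)$ of \eqref{jihao1}--\eqref{jihao2} through its initial decreasing phase and to show that, once $u_0$ is large, $u'$ is forced to become positive before $u$ ever reaches $0$. Since $u''(0)=-u_0/N<0$, $u$ is strictly decreasing on a right neighbourhood of $0$, so
\[
T:=\sup\{\,r>0:\ u>0\ \text{and}\ u'<0\ \text{on}\ (0,r)\,\}\in(0,+\infty]
\]
is well defined and $u>0$, $u'<0$ on $(0,T)$. If $T<\infty$, continuity gives $u(T)\ge0$ and $u'(T)\le0$; since $u(T)=u'(T)=0$ would force $u\equiv0$ by the uniqueness argument used for \eqref{xin}, exactly one of the following occurs: (A) $u(T)=0$, $u'(T)<0$; or (B) $u'(T)=0$, $u(T)>0$. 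If $T=+\infty$ we are in case (C): $u>0$ and $u'<0$ on all of $(0,\infty)$. I will show that for $u_0$ large only (B) can occur, and that (B) immediately yields $u_0\in\mathcal P$.

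Case (B) requires no largeness of $u_0$. From \eqref{jihao3}, $T^{N-1}u'(T)=\int_0^T(V(s)-1)u(s)s^{N-1}\,ds=0$, while $u\,s^{N-1}>0$ on $(0,T)$ and $V$ is continuous, strictly increasing with $V(0)=0$; hence $V-1$ cannot be $\le0$ throughout $(0,T)$, so $V(T)>1$. Therefore $u''(T)=(V(T)-1)u(T)>0$, $u'$ passes from negative to positive at $T$, and $u(r)>0$, $u'(r)>0$ for $r$ slightly larger than $T$; picking such an $r_1$ shows $u_0\in\mathcal P$.

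To rule out (A) and (C) for large $u_0$ — the heart of the matter — I would first observe that whenever $u>0$, $u'<0$ on some $(0,R)$ we have $0<u\le u_0$ there, so $(r^{N-1}u')'=r^{N-1}(V-1)u\ge-r^{N-1}u\ge-r^{N-1}u_0$, which integrates to $u'(r)\ge-u_0 r/N$ and hence
\[
u(r)\ \ge\ u_0\Bigl(1-\frac{r^2}{2N}\Bigr)\ \ge\ \frac{u_0}{2}\qquad\text{on }(0,\min(R,\sqrt N)).
\]
This forces $T\ge\sqrt N$ in case (A) (otherwise $u(T)\ge u_0/2>0$), and $T=\infty$ in case (C); so in both cases $u>0$, $u'<0$, $u\ge u_0/2$ on $(0,\sqrt N)$. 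Then \eqref{jihao3} gives $V'(t)\ge(u_0/2)^p t/N$, hence $V(s)\ge(u_0/2)^p s^2/(2N)$ on $(0,\sqrt N)$ and in particular $V(s)\ge(u_0/2)^p/8$ on $(\sqrt N/2,\sqrt N)$. Writing $(V-1)u=Vu-u\ge Vu-u_0$ and discarding the nonnegative part of $Vu\,s^{N-1}$ on $(0,\sqrt N/2)$,
\[
(\sqrt N)^{N-1}u'(\sqrt N)=\int_0^{\sqrt N}(V(s)-1)u(s)s^{N-1}\,ds\ \ge\ \int_{\sqrt N/2}^{\sqrt N}V(s)u(s)s^{N-1}\,ds\;-\;u_0\int_0^{\sqrt N}s^{N-1}\,ds,
\]
and since the first term on the right is $\ge c_N u_0^{\,p+1}$ and the second equals $c_N' u_0$ for positive constants $c_N,c_N'$ depending only on $N$ and $p$, we get $(\sqrt N)^{N-1}u'(\sqrt N)\ge c_N u_0^{\,p+1}-c_N' u_0$. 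Because $p+1>1$, this is positive once $u_0$ is large, so $u'(\sqrt N)>0$; but $u'<0$ on $(0,\sqrt N)$ and continuity of $u'$ up to $\sqrt N$ (the solution persists there since $u$ and $V$ are a priori bounded on $[0,\sqrt N]$) give $u'(\sqrt N)\le0$, a contradiction. Hence (A) and (C) are impossible and $u_0\in\mathcal P$.

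The step I expect to be the main obstacle is precisely this last estimate: one has to guarantee \emph{simultaneously} that $u$ does not vanish before the fixed radius $\sqrt N$ — handled by the lower bound $u\ge u_0(1-r^2/(2N))$ — and that the positive far-field contribution to $u'(\sqrt N)$, of size $\sim u_0^{\,p+1}$, overwhelms the negative contribution produced near the origin where $V<1$; the latter is of lower order (at most $O(u_0)$, and in fact $O(u_0^{\,1-pN/2})$, which is bounded since $N\ge2$ and $p\ge1$), so it is harmless. Choosing the threshold on $u_0$ so that both phenomena are in force is the only genuinely quantitative point of the proof.
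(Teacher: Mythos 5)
Your argument is correct, and it reaches the conclusion by a genuinely more elementary route than the paper. The paper also sets up the decreasing-positive phase (its $R_0$ is your $T$) and also ends by evaluating $u'$ at a specific radius via the integral identity \eqref{jihao3}, but its quadratic lower bound $u>u_0\bigl(1-r^2/r_0^2\bigr)$, $r_0=\sqrt{2N}\,u_0^{-p/4}$, is obtained through the auxiliary function $\phi_2=u+\lambda_0V-\lambda_0$ with $\lambda_0=u_0^{(2-p)/2}$, whose monotonicity rests on a differential inequality that uses $u\le u_0$ \emph{and} $p\le2$ (to get $\lambda_0u^p-u^2/\lambda_0\ge0$); the turning radius it then tests, $r_1=r_0/2$, shrinks like $u_0^{-p/4}$. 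You instead get $u\ge u_0\bigl(1-r^2/(2N)\bigr)$ on the \emph{fixed} interval $(0,\sqrt N)$ from nothing more than $V\ge0$ and $u\le u_0$, and you test $u'$ at the fixed radius $\sqrt N$, where the competition $c_Nu_0^{p+1}$ versus $c_N'u_0$ settles the matter since $p>0$. What your version buys is the elimination of the auxiliary function and of the $p\le2$ restriction at this step; what the paper's version buys is a sharper picture of where the turnaround actually happens for large $u_0$ (at scale $u_0^{-p/4}$), which is not needed for the lemma itself. Your handling of the terminal alternatives is also sound: case (B) is exactly the paper's endgame ($u'(R_0)=0$, $u(R_0)>0$ forces $V(R_0)>1$, hence a strict local minimum and membership in $\mathcal P$), and the degenerate case $u(T)=u'(T)=0$ is excluded by the same backward-uniqueness argument the paper uses for \eqref{xin}. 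The only presentational caveat is that your bound in fact forces $T>\sqrt N$ (strictly) in case (A), so $u'(\sqrt N)<0$ there and the contradiction with $u'(\sqrt N)>0$ is clean; as written ($T\ge\sqrt N$) you correctly fall back on $u'(\sqrt N)\le0$, which still suffices.
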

\begin{proof}
%Observe that $u(0)=u_0>0, u^{\prime}(0)=0$ and $u^{\prime\prime}(0)=-\frac{u_0}{N}<0.$ Then all solutions start strictly decreasing from $0$. We claim that $u_0\in\mathcal{P}$ for $u_0$ large enough and there exist $r_0$ such that $u^{\prime}(r_0)>0$.
Suppose on the contrary that $\mathcal{P}$ is empty for $u_0$ large enough. Let
\begin{equation}\label{R0}
R_0=\sup\{\bar r>0, u(r)>0, u^{\prime}(r)<0 \mbox{  for any  } r\in(0,\bar r]\}.
\end{equation}
We consider the function
\begin{equation}\label{jihao10}
\phi_2(r)=u+\lambda_0V-\lambda_0, \lambda_0=u_0^{\frac{2-p}{2}}, r\in[0,R_0).
\end{equation}
It satisfies the differential inequality equation
\begin{equation}\label{phi2}
\phi_2^{\prime\prime}+\frac{N-1}{r}\phi_2^{\prime}=(V-1)u+\lambda_0u^{p}=\frac{1}{\lambda_0}\phi_2u+\lambda_0u^{p}-\frac{1}{\lambda_0}u^2+u>\frac{1}{\lambda_0}\phi_2u
\end{equation}
for $p\in[1,2]$. When $u_0$ is large enough, we have
$$\phi_2(0)=u_0-\lambda_0>0, \phi_2^{\prime}(0)=0, \phi_2^{\prime\prime}(0)
=u^{\prime\prime}(0)+\lambda_0V^{\prime\prime}(0)=-\frac{u_0}{N}+\frac{\lambda_0u_0^{p}}{N}>0.$$
We claim $\phi_2$ is strictly increasing. Otherwise there must exist a maximum point $r^{\prime}$ such that $$\phi_2(r^{\prime})>0, \phi_2^{\prime}(r^{\prime})=0, \phi_2^{\prime\prime}(r^{\prime})\leq0.$$ However, according to \eqref{phi2},  we obtain
$$\phi_2^{\prime\prime}(r^{\prime})>\frac{1}{\lambda_0}\phi_2(r^{\prime})u(r^{\prime})>0.$$
This leads to a contradiction. The claim holds  and  $\phi_2$ is strictly increasing.
Thus $\phi_2(r)>\phi_2(0)=u_0-\lambda_0$. Furthermore,
\begin{equation}\label{jihao15}
u(r)>u_0-\lambda_0V(r)\mbox{ on } (0,R_0).
\end{equation}
On the other hand,  we conclude from equation \eqref{jihao3} for $V^{\prime}$ that
$$\frac{u^{p}r}{N}\leq V^{\prime}\leq\frac{u_0^{p}r}{N}\mbox{ on }(0,R_0).$$
Integrating these inequalities and using again the fact that $u$ is decreasing, we have the following estimates for $V$ :
\begin{equation}\label{vfanwei}
\frac{u^{p}r^2}{2N}\leq V\leq\frac{u_0^{p}r^2}{2N}\mbox{ on }(0,R_0).
\end{equation}
This together with \eqref{jihao10} and \eqref{jihao15} gives
\begin{equation}\label{1111}
u(r)> u_0-\lambda_0\frac{u_0^{p}r^2}{2N}=u_0-\frac{u_0^{1+\frac{p}{2}}r^2}{2N}\mbox{ on }(0,R_0).
\end{equation}
Let $$g(r)=u_0-\frac{u_0^{1+\frac{p}{2}}r^2}{2N}.$$ It's clear that $g(r)$ is strictly decreasing. There exists $r_0>0$ such that $g(r_0)=0$. Furthermore, we have $$r_0=\sqrt{2N/u_0^{\frac{p}{2}}}.$$
Let $r*=\min\{r_0,R_0\}$. Then according to \eqref{1111},  we have
\begin{equation}\label{2222}
u(r)> u_0-\frac{u_0^{1+\frac{p}{2}}r^2}{2N}=u_0(1-\frac{r^2}{r_0^2})>0\mbox{ on }(0,r*).
\end{equation}
If $r_0\leq R_0$, let $r_1=\frac{1}{2}r_0<r_0$.
Then
\begin{equation*}\begin{aligned}
u^{\prime}(r_1)=&\frac{1}{r_1^{N-1}}\int_0^{r_1}(V(r)-1)u(r)r^{N-1}dr\\
\geq&\frac{1}{r_1^{N-1}}\int_0^{r_1}\frac{u^{p+1}(r)r^{N+1}}{2N}dr-\frac{1}{r_1^{N-1}}\int_0^{r_1}u(r)r^{N-1}dr\\
\geq&\frac{u_0^{p+1}}{2Nr_1^{N-1}}\int_0^{r_1}(1-\frac{r^2}{r_0^2})^{p+1}r^{N+1}dr-\frac{u_0r_1}{N}\\
\geq&\frac{u_0^{p+1}r_0^{N+2}}{2Nr_1^{N-1}}\int_0^{\frac{r_1}{r_0}}(1-s^2)^{p+1}s^{N+1}ds-\frac{u_0r_1}{N}\\
=&\frac{u_0r_1}{N}[\frac{u_0^pr_0^N}{2r_1^N}\frac{2N}{u_0^{\frac{p}{2}}}\int_0^{\frac{r_1}{r_0}}(1-s^2)^{p+1}s^{N+1}ds-1]\\
=&\frac{u_0r_1}{N}[Nu_0^{\frac{p}{2}}(\frac{r_0}{r_1})^N\int_0^{\frac{r_1}{r_0}}(1-s^2)^{p+1}s^{N+1}ds-1]\\
=&\frac{u_0r_0}{2N}[2^NNu_0^{\frac{p}{2}}\int_0^{\frac{1}{2}}(1-s^2)^{p+1}s^{N+1}ds-1].
\end{aligned}\end{equation*}
We conclude that $u^{\prime}(r_1)>0$ for $u_0$ sufficiently large, which contradicts with the definition of $R_0$. Thus $r_0>R_0$. Let $$\delta=g(R_0)=u_0-\frac{u_0^{1+\frac{p}{2}}R_0^2}{2N}.$$ Then we have $\delta=g(R_0)>g(r_0)=0$ since $g$ is decreasing. For any $r\in(0,R_0)\varsubsetneqq(0,r_0)$, we have
$$u(r)>u_0-\frac{u_0^{1+\frac{p}{2}}r^2}{2N}\geq\delta>0\mbox{ on }(0,r_0)$$
according to \eqref{1111}.
Furthermore, we get $u(R_0)\geq\delta>0$. According to the definition of $R_0$, then we conclude $$u^{\prime}(R_0)=0.$$
Inserting it to \eqref{jihao3}, we see $V(R_0)>1$. Furthermore, according to \eqref{jihao1}, we get $u^{\prime\prime}(R_0)>0$. So $R_0$ must be the local minimum point. Due to the continuity of $u^{\prime}$ and $u$, there exist $\varepsilon>0$ such that $u^{\prime}(R_0+\varepsilon)>0$ and $u(R_0+\varepsilon)>0$.
So we have $u^{\prime}(R_0+\varepsilon)>0$ and $u(r)>0$ on $(0,R_0+\varepsilon)$, which contradicts the assumption that $\mathcal{P}$ is empty.

Thus $u_0\in\mathcal{P}$ for $u_0$ large enough. The proof is finished.
\end{proof}
\section{Uniqueness of positive radial solutions}
In this section, we prove that $\mathcal{G}$ has exactly one element.  Firstly, we illustrate the following lemma  which  shows that  any two solutions of  \eqref{jihao1} with the initial values \eqref{jihao2} cannot intersect as long as they are positive by analysing the features of their Wronskian.

\begin{lemma}\label{lem6}
Let $u_0\in\mathcal{G}\cup\mathcal{P}$. Suppose that $u_2(0)>u_1(0)>0$ and $u_2(r),u_1(r)$ exist on $[0,\infty)$ such that $u_1(r)\geq0$ on $[0,\infty)$. Then $u_2(r)>u_1(r)$ for any $r\in[0,\infty)$.
\end{lemma}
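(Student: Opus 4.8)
The plan is to compare $u_1$ and $u_2$ through a Wronskian-type quantity and to argue by contradiction at a hypothetical first crossing point.

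First I would upgrade the hypothesis $u_1\geq 0$ to $u_1>0$ on all of $[0,\infty)$: if $u_1(\bar r)=0$ at some $\bar r>0$, then $\bar r$ is an interior minimum of $u_1$, so $u_1'(\bar r)=0$, and the uniqueness argument already used in the proof of Lemma~\ref{lem3} (the system \eqref{jihao1} with vanishing $u$-data at $\bar r$ has only the trivial $u$-component) forces $u_1\equiv 0$, contradicting $u_1(0)>0$. Let $V_1,V_2$ denote the potentials associated with $u_1,u_2$ through \eqref{jihao2} and \eqref{jihao3}.

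Next I would introduce
\[
Q(r)=r^{N-1}\bigl(u_2'(r)\,u_1(r)-u_1'(r)\,u_2(r)\bigr),
\]
and compute, using $(r^{N-1}u_i')'=r^{N-1}(V_i-1)u_i$ together with the cancellation of the cross term $r^{N-1}u_1'u_2'$,
\[
Q'(r)=r^{N-1}\bigl(V_2(r)-V_1(r)\bigr)u_1(r)\,u_2(r),
\]
while $N\geq 2$ and $u_i'(0)=0$ give $Q(0)=0$. Now suppose the conclusion fails. Since $u_2(0)-u_1(0)>0$ and $u_2-u_1$ is continuous, $r^{*}:=\inf\{r>0:u_2(r)=u_1(r)\}$ is a well-defined positive number with $u_2>u_1$ on $[0,r^{*})$ and $u_2(r^{*})=u_1(r^{*})=:c$, where $c>0$ because $u_1>0$. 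On $(0,r^{*})$ we have $u_2>u_1\geq 0$, and since $p\in[1,2]$ the map $t\mapsto t^{p}$ is strictly increasing on $[0,\infty)$, so $u_2^{p}>u_1^{p}$ there; the integral formula for $V_i'$ in \eqref{jihao3} then gives $V_2'>V_1'$ on $(0,r^{*}]$, and integrating from $0$ (where $V_1=V_2=0$) yields $V_2>V_1$ on $(0,r^{*}]$. Hence every factor in $Q'$ is positive on $(0,r^{*})$, so $Q$ is strictly increasing there and $Q(r^{*})>Q(0)=0$. On the other hand $Q(r^{*})=(r^{*})^{N-1}c\,(u_2-u_1)'(r^{*})$, and since $u_2-u_1$ is positive on $[0,r^{*})$ and vanishes at $r^{*}$ one has $(u_2-u_1)'(r^{*})\leq 0$, hence $Q(r^{*})\leq 0$ — a contradiction. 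Therefore $u_2>u_1$ on $[0,\infty)$.

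The Wronskian identity is routine; the \emph{point to be careful about} is the sign bookkeeping: one needs $u_1$ — and hence $u_2=u_1+(u_2-u_1)$ — to stay strictly positive up to the crossing so that $Q(r^{*})$ genuinely records the sign of $(u_2-u_1)'(r^{*})$, and one needs the monotonicity of $t\mapsto t^{p}$ (here $p\geq 1$, though $p>0$ would suffice) to ensure $V_2-V_1>0$ as long as $u_2>u_1$. I do not expect any further difficulty.
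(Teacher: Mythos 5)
Your proof is correct and follows essentially the same route as the paper: both compare $u_1$ and $u_2$ via the Wronskian $\omega=u_2'u_1-u_1'u_2$, derive $(r^{N-1}\omega)'=r^{N-1}(V_2-V_1)u_1u_2>0$ up to a hypothetical first crossing, and contradict the sign $\omega(r^*)\leq 0$ forced by $(u_2-u_1)'(r^*)\leq 0$. Your preliminary upgrade of $u_1\geq 0$ to $u_1>0$ (via the ODE uniqueness argument) is a small refinement the paper leaves implicit, but otherwise the two arguments coincide.
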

\begin{proof}
Let $\omega(r)$ be the Wronskian of $u_1, u_2$ defined by
\begin{equation}\label{jihao8}
\omega(r)=u_2^{\prime}(r)u_1(r)-u_1^{\prime}(r)u_2(r).
\end{equation}
Then $\omega$ satisfies the differential equation
\begin{equation}\label{jihao9}
\omega^{\prime}+\frac{N-1}{r}\omega=(V_2-V_1)u_1u_2.
\end{equation}
We claim $u_2(r)>u_1(r)$ in $[0,\infty)$. In fact, suppose on the contrary that there exists $\bar{r}\in(0,\infty)$ such that
\begin{equation}\label{jiashe}
u_2(\bar{r})=u_1(\bar{r})\mbox{ and }u_2(r)>u_1(r)\mbox{ in }[0,\bar{r}).
\end{equation}
Note that $$(V_2(r)-V_1(r))^{\prime}=\frac{1}{r^{N-1}}\int_0^r(u_2^p(s)-u_1^p(s))s^{N-1}ds>0\mbox{ on } (0,\bar{r}].$$
Since $V_2(0)-V_1(0)=0$, we deduce that
$$V_2(r)>V_1(r)\mbox{ on }r\in(0,\bar{r}].$$
From the differential equation \eqref{jihao9} that we have
$$(\omega r^{N-1})^{\prime}=r^{N-1}(V_2-V_1)u_1u_2>0\mbox{ on }(0,\bar r].$$
We conclude that $\omega r^{N-1}$ is increasing on $(0,\bar{r})$. This combined with the fact $\omega(0)=0$,
\begin{equation}\label{wbar}
\omega(\bar{r})>0.
\end{equation}
On the other hand, set $$F(r)=u_2(r)-u_1(r).$$ Then $F(r)>0$ on $[0,\bar{r})$ and $F(\bar{r})=0$ due to \eqref{jiashe}. So $$u_2^{\prime}(\bar{r})-u_1^{\prime}(\bar{r})=F^{\prime}(\bar{r})=\lim\limits_{r\to\ \bar{r}}\frac{F(r)-F(\bar{r})}{r-\bar{r}}\leq0.$$
It is easy to know that $$\omega(\bar{r})=u_1(\bar{r})(u_2^{\prime}(\bar{r})-u_1^{\prime}(\bar{r}))\leq0,$$ which leads to a contradiction with \eqref{wbar}. Thus the claim is proved.
\end{proof}

\begin{rem}\label{remark3}
If $u_0\in\mathcal{N}$. Suppose that $u_2(0)>u_1(0)>0$ and $u_2(r),u_1(r)$ exist on $[0,R]$ such that $u_1(r)\geq0$ on $[0,R]$. It is then easy to prove $u_2(r)>u_1(r)$ for any $r\in[0,R]$ by imitating the proof of the Lemma \ref{lem6}.
\end{rem}
In the sequel, we need to study the decay estimate of the solutions in $\mathcal{G}$ at infinity.
Since $V$ is always strictly increasing, we deduce that $V_\infty :=\lim\limits_{r\to\infty}V(r)$ exists (including the case $V_\infty=+\infty$) and $V(r)<V_\infty$ for all $r>0$.
Then we have the following conclusion.
\begin{prop}\label{prop1}
Let $u_0\in\mathcal{G}$. Then $V_\infty\geq1$.
\end{prop}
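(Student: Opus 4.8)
The plan is to argue by contradiction: assume $V_\infty < 1$ and obtain a contradiction by studying the logarithmic derivative $z := -u'/u$ of the solution. First I would invoke Lemma~\ref{lem3}, which tells us that for $u_0\in\mathcal{G}$ the solution satisfies $u(r)>0$ and $u'(r)<0$ on all of $(0,\infty)$ with $u(r)\to 0$; this guarantees that $z$ is globally defined and strictly positive on $(0,\infty)$, and that it solves the Riccati-type equation \eqref{z}, namely
\begin{equation*}
z' = z^2 - \frac{N-1}{r}z + 1 - V \quad\text{on } (0,\infty).
\end{equation*}

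Assuming $V_\infty<1$, the monotonicity of $V$ gives $1-V(r) > 1-V_\infty > 0$ for every $r$. I would then pick $\tilde r>0$ so large that $\frac{N-1}{r}\le\sqrt{2(1-V_\infty)}$ for all $r\ge\tilde r$, and complete the square exactly as in the $\mathcal G$-part of the proof of Lemma~\ref{lem4}:
\begin{equation*}
z' \ge \frac{1}{2}z^2 + \Big(\frac{1}{2}z^2 - \sqrt{2(1-V_\infty)}\,z + 1 - V_\infty\Big) = \frac{1}{2}z^2 + \frac{1}{2}\big(z - \sqrt{2(1-V_\infty)}\big)^2 \ge \frac{1}{2}z^2
\end{equation*}
on $(\tilde r,\infty)$. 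Hence $z'/z^2\ge\tfrac12$, equivalently $\big(\tfrac1z+\tfrac r2\big)'\le 0$ on $(\tilde r,\infty)$, and integrating over $(\tilde r,r)$ yields
\begin{equation*}
\frac{1}{z(r)} \le \frac{1}{z(\tilde r)} + \frac{1}{2}(\tilde r - r) \longrightarrow -\infty \quad\text{as } r\to\infty,
\end{equation*}
so $z(r)<0$ for large $r$, contradicting $z>0$. Therefore $V_\infty\ge 1$.

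This is essentially a repackaging of the blow-up mechanism already deployed for $\mathcal G$ inside the proof of Lemma~\ref{lem4}, so I do not expect a genuine obstacle; the only point needing (minor) care is that $z=-u'/u$ is well defined for \emph{all} $r>0$, which is immediate once Lemma~\ref{lem3} supplies $u>0$ and $u'<0$ throughout $(0,\infty)$. One should also note that the case $V_\infty=+\infty$ is automatically consistent with the claim, so it suffices to rule out $V_\infty<1$ as above.
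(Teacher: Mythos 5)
Your proof is correct, but it is not the route the paper takes for this proposition. You argue by contradiction: assuming $V_\infty<1$, you run the Riccati blow-up estimate $z'\ge\tfrac12 z^2$ obtained by completing the square with the constant $\sqrt{2(1-V_\infty)}$ (exactly the computation from the $\mathcal G$-branch of the proof of Lemma~\ref{lem4}), and you contradict the global positivity of $z=-u'/u$, which Lemma~\ref{lem3} supplies; the case $V_\infty=+\infty$ is indeed trivial. The paper instead treats the case $V_\infty<\infty$ directly and affirmatively: it first shows that $z$ remains bounded, using a completing-the-square estimate that is only valid in the region $r\ge\tilde r$, $z\ge 2\sqrt{V_\infty}$ (with the constant $\tfrac12\sqrt{V_\infty}$ in place of your $\sqrt{2(1-V_\infty)}$), then deduces $u'(r)=-z(r)u(r)\to0$, and finally computes $\lim_{r\to\infty}z^2=V_\infty-1$ by l'H\^{o}pital's rule, so that $V_\infty-1\ge0$. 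Your argument is shorter and more elementary — it bypasses both the boundedness step and l'H\^{o}pital, and its differential inequality holds for all $z>0$ rather than only above a threshold — but it delivers only the inequality $V_\infty\ge1$. The paper's extra work is not wasted: the exact limit \eqref{limz} established in its proof is what drives Corollary~\ref{coro} (the exponential decay rate $k\in(0,\sqrt{V_\infty-1})$), so if one substituted your proof of the proposition, that limit would still need to be derived separately before the corollary.
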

\begin{proof}
First of all, let $V_\infty<\infty$. We consider the function $z :=-\frac{u^{\prime}}{u}$ which is well defined for all $r\geq0$ and $z$ satisfies the differential equation
$$z^{\prime}=-\frac{u^{\prime\prime}u-(u^{\prime})^2}{u^2}=z^2-\frac{N-1}{r}z+1-V.$$
Now, choose $\tilde{r}$ such that $\frac{N-1}{r}\leq\frac{N-1}{\tilde{r}}\leq\frac{1}{2}\sqrt{V_\infty}$ for all $r\geq \tilde{r}$. Consider the direction field in the $(r,z)$ plane for the preceding differential equation. There holds that
\begin{equation*}\begin{aligned}
z^{\prime}\geq&\frac{1}{2}z^2+(\frac{1}{2}z^2-\frac{1}{2}\sqrt{V_\infty}z+1-V_\infty)\\
\geq&\frac{1}{2}z^2+(\frac{1}{2}z^2-\frac{1}{2}\sqrt{V_\infty}z-V_\infty)+1\\
\geq&\frac{1}{2}z^2+(\frac{1}{2}z-\sqrt{V_\infty})(z+\sqrt{V_\infty})+1\\
\geq&\frac{1}{2}z^2+1\\
\geq&\frac{1}{2}z^2
\end{aligned}\end{equation*}
when $$r\geq \tilde{r}, z\geq2\sqrt{V_\infty}.$$  By applying  similar arguments as in the proof of Lemma \ref{lem4},  we see that the function $z$ satisfies
\begin{equation}\label{zzz}
\frac{1}{z(r)}\leq\frac{1}{z(\tilde{r})}+\frac{1}{2}(\tilde{r}-r)\mbox{ on }(\tilde{r},\infty).
\end{equation}
Let $$h(r)=\frac{1}{z(\tilde{r})}+\frac{1}{2}(\tilde{r}-r)\mbox{ on }(\tilde{r},\infty).$$
Suppose that there exists $r_1>\tilde{r}$ such that $h(r_1)=0$. Then we have $h(r)<0$ for any $r>r_1$, which yields a contradiction with the definition fo $z(r)$. Hence  $z(r)$ remains bounded.

Since $z(r)=-\frac{u^{\prime}(r)}{u(r)}$, we have $u^{\prime}(r)=-z(r)u(r)$, and so $$\lim\limits_{r\to\infty}u^{\prime}(r)=-\lim\limits_{r\to\infty}z(r)u(r)=0.$$
Therefore, we conclude from  l'H\^{o}spital's ruler that
\begin{equation}\begin{aligned}\label{limz}
\lim\limits_{r\to\infty}z^2=&\lim\limits_{r\to\infty}\frac{{u^{\prime}}^2}{u^2}\\
=&\lim\limits_{r\to\infty}\frac{u^{\prime\prime}}{u^{\prime}}\\
=&\lim\limits_{r\to\infty}\frac{(V-1)u-\frac{N-1}{r}u^{\prime}}{u}\\
=&\lim\limits_{r\to\infty}(\frac{N-1}{r}z+V-1)\\
=&V_\infty-1.
\end{aligned}\end{equation}
Thus it is easy to know that $V_\infty\geq1$. The proof is completed.
\end{proof}

\begin{cor}\label{coro}
If $1<V_\infty\leq\infty$, then for any $k\in(0,\sqrt{V_\infty-1})$,
$$\lim\limits_{r\to\infty}\sup u(r)e^{kr}<\infty.$$
\end{cor}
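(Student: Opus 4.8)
The plan is to work with the Riccati variable $z:=-u'/u$ already used in the proof of Proposition~\ref{prop1}. Since $u_0\in\mathcal G$, Lemma~\ref{lem3} gives $u>0$ and $u'<0$ on $(0,\infty)$, so $z$ is well defined and positive on $(0,\infty)$, exists on all of $[0,\infty)$, and solves
\[
z'=z^2-\frac{N-1}{r}z+1-V .
\]
The strategy is: first show that $z$ is eventually bounded below by $k$, and then integrate $(\ln u)'=-z<-k$ to obtain the exponential decay bound directly.

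Fix $k\in(0,\sqrt{V_\infty-1})$, so $k^2<V_\infty-1$. Because $V$ is strictly increasing with limit $V_\infty$, there is $R>0$ with $V(r)-1>k^2$ for all $r\ge R$. The heart of the argument is the claim that $z(r)>k$ for every $r\ge R$. To see this, suppose $z(r^*)\le k$ at some $r^*\ge R$. Substituting $0<z(r^*)\le k$ and $V(r^*)-1>k^2$ into the Riccati equation gives $z'(r^*)\le k^2-\frac{N-1}{r^*}z(r^*)+1-V(r^*)<0$, and the same estimate is valid at every point $r\ge R$ with $0<z(r)<k$; hence $z$ is strictly decreasing on $[r^*,\infty)$ and stays in $(0,k)$, so $z(r)\to L\in[0,k]$. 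Plugging $z(r)\to L$ and $V(r)\to V_\infty$ back into the Riccati equation shows $z'(r)\to L^2+1-V_\infty$, interpreted as $-\infty$ when $V_\infty=\infty$. Since a function that converges and whose derivative also has a limit must have that limit equal to $0$, the finite case forces $L^2=V_\infty-1>k^2$, i.e.\ $L>k$, contradicting $L\le k$; and in the case $V_\infty=\infty$ the relation $z'\to-\infty$ contradicts $z>0$. This establishes the claim.

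With $z(r)>k$ on $[R,\infty)$, we get $(\ln u)'(r)=u'(r)/u(r)=-z(r)<-k$, so $r\mapsto\ln u(r)+kr$ is decreasing on $[R,\infty)$; therefore $u(r)e^{kr}\le u(R)e^{kR}$ for all $r\ge R$, whence $\limsup_{r\to\infty}u(r)e^{kr}\le u(R)e^{kR}<\infty$. I expect the only delicate point to be the contradiction step, i.e.\ ruling out that $z$ settles down to a limit $\le k$; this is handled by feeding the putative limit back into the Riccati equation and splitting into the cases $V_\infty<\infty$ and $V_\infty=\infty$, exactly as above.
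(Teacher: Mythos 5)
Your proof is correct, but it takes a genuinely different route from the paper's. The paper deduces the eventual bound $-u^{\prime}/u\geq k$ directly from the limit \eqref{limz}, $\lim_{r\to\infty}z^2=V_\infty-1$, established in Proposition \ref{prop1} via l'H\^opital's rule, and then disposes of the case $V_\infty=\infty$ in one line by asserting that $z$ is unbounded and extracting \emph{some} $K>0$. You instead run a barrier argument on the Riccati equation: on the region $r\geq R$, $0<z\leq k$ one has $z^{\prime}<0$, so if $z$ ever drops to $k$ it is trapped and must converge to some $L<k$; feeding $L$ back into the equation forces $L^2=V_\infty-1>k^2$ (or $z^{\prime}\to-\infty$ when $V_\infty=\infty$), a contradiction. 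This is slightly longer but buys two things. First, it is self-contained and does not rely on the l'H\^opital computation \eqref{limz}, which in the paper is derived under the standing assumption $V_\infty<\infty$; your argument treats $V_\infty<\infty$ and $V_\infty=\infty$ uniformly and actually delivers the full statement "for every $k>0$" in the infinite case, where the paper's proof only produces a single $K$. Second, you integrate $(\ln u)^{\prime}<-k$ from $R$ onward and obtain the bound $u(R)e^{kR}$, which is the correct base point; the paper's "integrating over $(0,r)$" is a small slip since $-u^{\prime}/u\geq k$ is only known for large $r$. The one step worth spelling out in your write-up is the standard fact that a convergent function whose derivative converges must have derivative tending to $0$; you invoke it correctly, and the rest of the argument (positivity of $z$ from Lemma \ref{lem3}, the trapping of $z$ in $(0,k)$) is sound.
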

\begin{proof}
If $1<V_\infty<\infty$, we have $\lim\limits_{r\to\infty}z^2=V_\infty-1$ by applying  \eqref{limz}. Therefore, for any $k\in(0,\sqrt{V_\infty-1})$ and $r$ sufficiently large, we obtain
$-\frac{u^{\prime}}{u}\geq k.$
Integrating it over $(0,r)$, we can obtain $\lim\limits_{r\to\infty}\sup u(r)e^{kr}\leq u_0<\infty$.

If $V_\infty$ is infinite, then $z$ is also unbounded. So there still exist $K>0$ such that $-\frac{u^{\prime}}{u}\geq K$ and then $\lim\limits_{r\to\infty}\sup u(r)e^{Kr}\leq u_0<\infty$. Thus the proof is completed.
\end{proof}

\begin{thm}\label{thel}
The set $\mathcal{G}$ has exactly one element.
\end{thm}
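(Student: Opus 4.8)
The plan is to combine the topological picture of the sets $\mathcal N,\mathcal G,\mathcal P$ with a Wronskian comparison and the behaviour of the logarithmic derivative $z=-u'/u$ at infinity. \textbf{Step 1 (structure of the three sets).} First I would upgrade Lemmas~\ref{lem7}, \ref{lem4}, \ref{lem5} to a monotone ordering: $u_0\in\mathcal N\Rightarrow(0,u_0]\subset\mathcal N$ and $u_0\in\mathcal P\Rightarrow[u_0,\infty)\subset\mathcal P$. This is the content of the comparison principle: if $\widetilde u_0<u_0$, then as long as both solutions stay non-negative the one issued from $\widetilde u_0$ lies strictly below the one issued from $u_0$ (positivity of the Wronskian, Remark~\ref{remark3}/Lemma~\ref{lem6}); hence if the larger solution crosses $0$ with non-positive derivative up to that point, so does the smaller one, and no later, so $\widetilde u_0\in\mathcal N$, and symmetrically for $\mathcal P$ (where only a bounded interval is needed). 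Since $(0,\tfrac14)\subset\mathcal N$, $\mathcal P$ contains a half-line, $\mathcal N,\mathcal P$ are open, and $\mathcal N,\mathcal G,\mathcal P$ are disjoint with union $(0,\infty)$ (Remark~\ref{remark1}, Lemma~\ref{lem2}), I obtain $\mathcal N=(0,\alpha)$, $\mathcal P=(\beta,\infty)$, $\mathcal G=[\alpha,\beta]$ with $0<\alpha\le\beta<\infty$; in particular $\mathcal G\neq\emptyset$, and the assertion reduces to proving $\alpha=\beta$.

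\textbf{Step 2 (comparison of two candidates).} Suppose $\alpha<\beta$ and pick $u_1(0)<u_2(0)$ in $\mathcal G$, with solutions $(u_i,V_i)$. By Lemma~\ref{lem6}, $u_2>u_1>0$ on $[0,\infty)$ and both decay to $0$. From \eqref{jihao3}, $V_2'>V_1'$ on $(0,\infty)$ and $V_2(0)=V_1(0)=0$, so $V_2-V_1>0$ is strictly increasing; fixing $r_0>0$ gives $V_2(r)-V_1(r)\ge\eta:=V_2(r_0)-V_1(r_0)>0$ for $r\ge r_0$, hence $V_{2,\infty}\ge V_{1,\infty}+\eta$ (each $\ge1$ by Proposition~\ref{prop1}, so $V_{2,\infty}>1$). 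The Wronskian $\omega=u_2'u_1-u_1'u_2$ satisfies $(r^{N-1}\omega)'=r^{N-1}(V_2-V_1)u_1u_2>0$ and $\omega(0)=0$, so $\omega>0$ on $(0,\infty)$; dividing by $u_1u_2$ and using $z_i:=-u_i'/u_i>0$ (Lemma~\ref{lem3}) yields $z_1(r)>z_2(r)>0$ for all $r>0$.

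\textbf{Step 3 (contradiction at infinity).} The point is that a larger limiting potential forces a faster decay, which should make $z_2>z_1$ near infinity and contradict Step~2. If $V_{1,\infty},V_{2,\infty}<\infty$, then \eqref{limz} gives $z_i(r)^2\to V_{i,\infty}-1$, so $z_1>z_2$ forces $V_{1,\infty}-1\ge V_{2,\infty}-1$, against $V_{2,\infty}\ge V_{1,\infty}+\eta$. If $V_{1,\infty}<\infty=V_{2,\infty}$, then $z_1$ is bounded by \eqref{limz}, but $z_2$ must be unbounded (else $z_2'=z_2^2-\tfrac{N-1}{r}z_2+1-V_2\to-\infty$ forces $z_2\to-\infty$, impossible since $z_2>0$), contradicting $z_2<z_1$. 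This leaves $V_{1,\infty}=V_{2,\infty}=+\infty$, which is where I expect the real work: by Corollary~\ref{coro} both $u_1,u_2$ decay exponentially, so $m_i:=\int_0^\infty u_i^p(s)s^{N-1}\,ds\in(0,\infty)$ with $m_2>m_1$, and since $V_i'(r)=r^{1-N}\int_0^r u_i^p s^{N-1}\,ds\sim m_i r^{1-N}$, l'H\^{o}pital gives $V_2(r)/V_1(r)\to m_2/m_1>1$; on the other hand, a phase-plane analysis of $z_i'=z_i^2-\tfrac{N-1}{r}z_i+1-V_i$ (the trajectory $z_i>0$ can neither dip below the nullcline $\tfrac{N-1}{2r}+\sqrt{(\tfrac{N-1}{2r})^2+V_i-1}$, or it would be swept through $0$, nor stay far above it, or it would blow up at finite $r$), together with the l'H\^{o}pital computation behind \eqref{limz} applied to $u_i'^2/(u_i^2(V_i-1))$, shows $z_i(r)^2/(V_i(r)-1)\to1$. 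Then $z_2(r)^2/z_1(r)^2=\dfrac{z_2^2/(V_2-1)}{z_1^2/(V_1-1)}\cdot\dfrac{V_2-1}{V_1-1}\to m_2/m_1>1$, so $z_2(r)>z_1(r)$ for all large $r$, contradicting Step~2. Hence $\alpha=\beta$, $\mathcal G$ is a singleton, and with Lemma~\ref{lem3} this also proves Theorem~\ref{th}. The only genuinely delicate ingredient is this last case: quantifying the decay sharply enough (the asymptotics $z_i^2\sim V_i-1$ and $V_2/V_1\to m_2/m_1$) to pin down the sign of $z_1-z_2$ at infinity; everything else is a direct use of the Wronskian identity and the preceding lemmas.
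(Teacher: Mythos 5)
Your Steps 1--2 reproduce the paper's setup (two elements of $\mathcal{G}$, Lemma \ref{lem6}, $V_2>V_1$, positivity of $r^{N-1}\omega$), with Step 1 being harmless but unnecessary extra structure: the theorem only needs that two distinct elements of $\mathcal{G}$ lead to a contradiction, which you reach anyway in Step 2. The divergence, and the problem, is in Step 3. You convert $\omega>0$ into $z_1>z_2$ and try to contradict this by asymptotics of $z_i$ at infinity. The first two sub-cases are fine, but the sub-case $V_{1,\infty}=V_{2,\infty}=+\infty$ is not closed: the key claim $z_i(r)^2/(V_i(r)-1)\to 1$ is only asserted via a phase-plane heuristic. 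Note that \eqref{limz} is derived in the paper under the hypothesis $V_\infty<\infty$ (the l'H\^{o}pital step $\lim u'^2/u^2=\lim u''/u'$ needs the latter limit to exist, and the argument bounding $z$ breaks down when $V\to\infty$), so you cannot simply invoke it; proving $z^2\sim V-1$ when $V\to\infty$ is a genuine WKB-type estimate that requires its own two-sided barrier argument. Since this is exactly the case you identify as ``where the real work is,'' the proposal as written has a gap precisely there. (Incidentally, combining Corollary \ref{coro} with \eqref{jihao3} shows $V_\infty=\infty$ can only occur for $N=2$, which slightly narrows but does not remove the difficulty.)

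The paper avoids this entirely by running the Wronskian argument to its conclusion without ever passing to $z_1-z_2$: from \eqref{jihao9}, $r^{N-1}\omega$ is positive and strictly increasing on $(0,\infty)$, so it is bounded below by a positive constant on $[r_1,\infty)$ for any fixed $r_1>0$; on the other hand, since $V_{2,\infty}>V_{1,\infty}\geq 1$ (Proposition \ref{prop1}), Corollary \ref{coro} gives exponential decay of $u_2$, whence $r^{N-1}u_2$ and $r^{N-1}u_2'$ are uniformly bounded and
$|r^{N-1}\omega|\leq c_1|u_1|+c_2|u_1'|\to 0$. An increasing positive function cannot tend to zero, and the contradiction is immediate. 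This only requires exponential decay of the \emph{larger} solution $u_2$ (guaranteed by $V_{2,\infty}>1$) plus $u_1,u_1'\to 0$, and needs no case analysis on whether $V_{i,\infty}$ is finite. If you want to salvage your route, the cleanest fix is to replace Step 3 by this decay-of-$r^{N-1}\omega$ argument; otherwise you must supply a full proof of the asymptotics $z_i^2\sim V_i-1$ in the doubly infinite case.
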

\begin{proof}
We prove it by contradiction. Suppose on the contrary that $u_1(0),u_2(0)\in\mathcal{G}$. Without loss of generality, we assume  $u_2(0)>u_1(0)$. According to  Lemma \ref{lem6}, we have $u_2(r) > u_1(r) >0$ for all $r\geq0$. Since $V_2(0)-V_1(0)=0$, by \eqref{jihao1},  we have
$$(V_2(r)-V_1(r))^{\prime}=\frac{1}{r^{N-1}}\int_0^r(u_2^p(s)-u_1^p(s))s^{N-1}ds>0.$$
and then
\begin{equation}\label{v1v2}
V_2(r)>V_1(r)\mbox{ in }(0,\infty).
\end{equation}
Furthermore,  according to  \eqref{jihao9} for their Wronskian $\omega(r)$, we get $\omega(0)=0$ and
$$(\omega r^{N-1})^{\prime}=(V_2-V_1)u_1u_2r^{N-1}>0,$$
which implies that $\omega(r)r^{N-1}>0$ and then $\omega(r)r^{N-1}$ is a positive and strictly increasing function in $(0,\infty)$.

On the other hand, we claim that
\begin{equation}\label{duanyan3}
\lim\limits_{r\to\infty}\omega (r)r^{N-1}=0.
\end{equation}
Indeed,  since $u_2(0)>u_1(0)>0$,  we have $u_2(r)>u_1(r)\mbox{ on }(0,\infty)$ according to Lemma \ref{lem6}. This together with \eqref{v1v2} and Proposition \ref{prop1}, implies that
\begin{equation}\label{voo}
V_{2}(\infty)>V_{1}(\infty)\geq1.
\end{equation}
By Corollary\ref{coro}, we know for any $k\in(0,\sqrt{V_2(\infty)-1})$,
$$\lim\limits_{r\to\infty}\sup u_2(r)r^{N-1}<\lim\limits_{r\to\infty}\sup u_2(r)e^{kr}<\infty.$$
According to \eqref{jihao3}, we have $V_2(r)\leq\frac{u_2^{p}(0)r^{2}}{2N}$ for any $r\in(0,\infty)$. Then
$$u_2^{\prime}r^{N-1}=\int_0^r(V_2(s)-1)u_2(s)s^{N-1}ds\leq\int_0^r\frac{u_2^p(0)}{2N}u_2(s)s^{N+1}ds. $$
This together with the exponential decay of $u_2$, yields that $u_2^{\prime}r^{N-1}$ and $u_2r^{N-1}$ is uniformly bounded. Notice that  $\lim\limits_{r\to\infty}u(r)=\lim\limits_{r\to\infty}u^{\prime}(r)=0$ and
$$|\omega(r)r^{N-1}|\leq|u_1||u_2^{\prime}r^{N-1}|+|u_1^{\prime}||u_2r^{N-1}|\leq c_1|u_1|+c_2|u_1^{\prime}|,$$
where $c_1,c_2$ are positive constants. The claim holds. Therefore, the proof is finished.
\end{proof}

\end{document}